\numberwithin{itemcounter}{subsection}
\def\k{\mathbf{k}}
\def\C{\mathbb{C}}
\def\d{\mathbf{d}}
\def\H{\mathbf{H}}
\def\uQ{\underline{Q}}
\theoremstyle{plain}
\newtheorem{theorem}{Theorem}[section]
\newtheorem{lemma}[theorem]{Lemma}
\newtheorem{lemma-definition}[theorem]{Lemma-Definition}
\newtheorem{definition-lemma}[theorem]{Definition-Lemma}
\newtheorem{proposition}[theorem]{Proposition}
\newtheorem{conjecture}[theorem]{Conjecture}
\newtheorem{corollary}[theorem]{Corollary}
\theoremstyle{definition}
\theoremstyle{remark}
\numberwithin{equation}{section}
\author{T. Bozec, O. Schiffmann}
\title{Counting absolutely cuspidals for quivers }
\begin{document}

\begin{abstract} For an arbitrary quiver $Q=(I,\Omega)$ and dimension vector $\mathbf{d} \in \mathbb{N}^I$ we define the dimension of \textit{absolutely cuspidal functions} on the moduli stacks of representations of dimension $\mathbf{d}$ of a quiver $Q$ over a finite field $\mathbb{F}_q$,  and prove that it is a polynomial in $q$, which we conjecture to be positive and integral. We obtain a closed formula for these dimensions of spaces of cuspidals for totally negative quivers. 
\end{abstract}

\maketitle

\setcounter{tocdepth}{2}

\tableofcontents

\section{Introduction}

\noindent
\textbf{1.1.} Let $Q=(I,\Omega)$ be an arbitrary quiver. For any finite field $\k=\mathbb{F}_q$ one may consider the Hall algebra $\mathbf{H}_{Q/ \k}$, which is a (twisted) self-dual complex Hopf algebra naturally graded by the lattice $\mathbb{Z}^I$~:
$$\mathbf{H}_{Q/\k}=\bigoplus_{\mathbf{d} \in \mathbb{N}^I} \mathbf{H}_{Q/\k}[\mathbf{d}].$$ 
Denote by $(\epsilon_i)_i$ the canonical basis of $\mathbb{Z}^I$.
When the quiver $Q$ contains no edge loop the spaces $\mathbf{H}_{Q/\k}[\epsilon_i]$ are all one-dimensional. In that case, one often considers the \textit{spherical subalgebra} $\mathbf{H}_{Q/\k}^{sph} \subset \mathbf{H}_{Q/\k}$ which is defined as the subalgebra generated by
$\mathbf{H}_{Q/\k}[\epsilon_i]$, $i \in I$.
By the Ringel-Green theorem, $\mathbf{H}_{Q/\k}^{sph}$ is isomorphic to the specialization at $v=q^{\frac{1}{2}}$ of the positive half  $\mathbf{U}_v^+(\mathfrak{g}_Q)$ of the quantized envelopping algebra of the Kac-Moody algebra $\mathfrak{g}_Q$ associated to $Q$. In particular the character
$$\text{ch}(\mathbf{H}_{Q/\k}^{sph})=\sum_{\mathbf{d}} \text{dim}(\mathbf{H}_{Q/\k}^{sph}[\mathbf{d}]) z^{\mathbf{d}}$$
is given by the Weyl-Kac formula, and is independent of $q$. On the contrary, the character of $\mathbf{H}_{Q/\k}$ does strongly depend on $q$ as soon as $Q$  is not of finite Dynkin type (i.e. as soon as $\mathbf{H}_{Q/\k}^{sph}$ is a proper subalgebra of $\mathbf{H}_{Q/\k}$).
To understand this dependence, let us consider the subspace of \textit{primitive} or \textit{cuspidal} elements  
$$\mathbf{H}_{Q/\k}^{cusp}[\mathbf{d}]=\{x \in \mathbf{H}_{Q/\k}[\mathbf{d}]\;|\; \Delta(x) =x\otimes 1 + 1 \otimes x\}.$$
It is well-known and easy to see that $\{\mathbf{H}_{Q/\k}^{cusp}[\mathbf{d}]\;|\; \mathbf{d} \in \mathbb{N}^I\}$ form a minimal set of
generators for $\mathbf{H}_{Q/\k}$.

\vspace{.1in}

Set $\Pi_0=\{\epsilon_i\;|\; i \in I \}$ and let $(\,,\,)~: \mathbb{Z}^I \otimes \mathbb{Z}^I \to \mathbb{Z}$ be the symmetrized Euler form.
The starting point of this short note is the following theorem, due to B. Deng and J. Xiao (in the case of (possibly valued) quivers without oriented cycles), see
\cite{DX}~:

\vspace{.1in}

\begin{theorem}\label{T:main} For any quiver $Q=(I,\Omega)$ and dimension vector $\mathbf{d} \in \mathbb{N}^I$ there exists a polynomial
$C_{Q,\mathbf{d}} \in \mathbb{Q}[t]$
such that for any finite field $\k$
$$\mathrm{dim}(\mathbf{H}^{cusp}_{Q/\k}[\mathbf{d}])=C_{Q,\mathbf{d}}(|\k|).$$
This polynomial does not depend on the orientation $\Omega$. 
\end{theorem}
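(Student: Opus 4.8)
The plan is to reduce the computation of $\dim(\mathbf{H}^{cusp}_{Q/\k}[\mathbf{d}])$ to the dimensions $\dim(\mathbf{H}_{Q/\k}[\mathbf{e}])$ of the full graded pieces, for which polynomiality and orientation-independence are available, and then to show that the transformation linking cuspidals to these dimensions is universal, i.e. independent of $q$ and of $\Omega$, with rational coefficients.

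First I would record the input. For a fixed $\k$ with $|\k|=q$, the space $\mathbf{H}_{Q/\k}[\mathbf{d}]$ is the space of functions on the finite set of isomorphism classes of $\mathbf{d}$-dimensional representations of $Q$ over $\k$, so $\dim(\mathbf{H}_{Q/\k}[\mathbf{d}])$ is exactly that number. By Krull--Schmidt it is a universal polynomial expression in the numbers of indecomposables of dimension $\mathbf{e}\le\mathbf{d}$, and by Kac's theorem together with Galois descent from absolutely indecomposables, the latter are polynomials in $q$ that do not depend on $\Omega$. Hence the Poincar\'e series
\[
A_{Q,\k}(z)=\sum_{\mathbf{d}} \dim(\mathbf{H}_{Q/\k}[\mathbf{d}])\, z^{\mathbf{d}}
\]
has coefficients in $\mathbb{Z}[q]$ that are independent of the orientation.

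Next I would invoke the Hopf-algebraic structure. By Green's theorem $\mathbf{H}_{Q/\k}$ is a graded connected (twisted) self-dual Hopf algebra whose primitive part in degree $\mathbf{d}$ is exactly $\mathbf{H}^{cusp}_{Q/\k}[\mathbf{d}]$; by self-duality the primitives are dual to the indecomposables, which is why, as noted above, the cuspidals form a minimal set of homogeneous generators. Following Sevenhant--Van den Bergh, $\mathbf{H}_{Q/\k}$ is isomorphic to the positive part of the quantized enveloping algebra of a Borcherds algebra $\mathfrak{g}_{\mathbf{H}}$ whose simple roots, counted with multiplicity, are given by the cuspidal dimensions $\dim(\mathbf{H}^{cusp}_{Q/\k}[\mathbf{e}])$. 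The PBW theorem then yields the product formula
\[
A_{Q,\k}(z)=\prod_{\mathbf{e}\ne 0}(1-z^{\mathbf{e}})^{-g_{\mathbf{e}}(q)},
\]
where $g_{\mathbf{e}}(q)=\dim (\mathfrak{g}_{\mathbf{H}})_{\mathbf{e}}$ are the root multiplicities.

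Finally I would run the inversion in two universal steps. Taking the ordinary (M\"obius) logarithm of the product formula expresses each root multiplicity $g_{\mathbf{e}}(q)$ as an explicit $\mathbb{Q}$-linear combination of products of coefficients of $A_{Q,\k}$; since these operations are independent of $q$ and $\Omega$, each $g_{\mathbf{e}}$ lies in $\mathbb{Q}[q]$ and is orientation-independent. The cuspidal (simple-root) dimensions are then recovered from the $g_{\mathbf{e}}$ through the Borcherds denominator identity for $\mathfrak{g}_{\mathbf{H}}$, whose Weyl group is determined by the loop-free vertices and hence by the underlying graph alone; matching coefficients order by order gives a universal, triangular, $q$- and $\Omega$-independent relation, whence $\dim(\mathbf{H}^{cusp}_{Q/\k}[\mathbf{d}])=C_{Q,\mathbf{d}}(q)$ with $C_{Q,\mathbf{d}}\in\mathbb{Q}[t]$ independent of $\Omega$. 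The main obstacle is precisely this last extraction: one must control the passage from the non-cocommutative Hall algebra to the Borcherds presentation and verify that the denominator-identity inversion, which carries the divisions by $\tfrac1k$ inherent in the logarithm, keeps the output polynomial (rather than merely rational) in $q$. This is where the structure theory, and not just the bare series $A_{Q,\k}$, is indispensable, and it is also what makes the integrality of $C_{Q,\mathbf{d}}$ a genuine conjecture rather than a consequence of the argument.
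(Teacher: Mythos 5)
Your proposal is correct and follows essentially the same route as the paper: Lemma on $\dim(\mathbf{H}_{Q/\k}[\mathbf{d}])$ via Kac $I$-polynomials, the Sevenhant--Van den Bergh identification of cuspidals with simple root multiplicities of a Borcherds algebra, and a triangular inversion of the Borcherds character/denominator formula, with orientation-independence coming from the symmetrized Euler form and the $I$-polynomials; your ``matching coefficients order by order'' is exactly the paper's induction on $\mathbf{d}$ (there packaged as $\dim(\mathbf{H}^{cusp}_{Q/\k}[\mathbf{d}])=\dim(\mathbf{H}_{Q/\k}[\mathbf{d}])-\dim(U(\widetilde{\mathfrak{n}}^{<\mathbf{d}}_{Q/\k})[\mathbf{d}])$), and your intermediate extraction of the root multiplicities $g_{\mathbf{e}}$, which in fact equal $I_{Q,\mathbf{e}}$, is a harmless detour. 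The only blemish is the aside that ``the primitives are dual to the indecomposables,'' which is not accurate as stated, but it carries no weight in your argument.
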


\vspace{.1in}

The proof we give in \textbf{4.1.} (which is close to the original proof in \cite{DX}) is constructive, i.e. it also provides a recursive algorithm to compute $C_{Q,\mathbf{d}}$. In particular, using this algorithm one can prove the following

\vspace{.1in}

\begin{proposition}\label{P:firstprop} We have $C_{Q,\d}=0$ unless $\d \in \Pi_0$ or $(\d,\d)\leq 0$ and
in the latter case we have $C_{Q,\d}(0)=0$.
\end{proposition}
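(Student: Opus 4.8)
The plan is to analyse the recursion of \textbf{4.1} through the interpretation of $\mathbf{H}^{cusp}_{Q/\k}[\d]$ as the space of \emph{primitive} elements. Recall that under Green's pairing $\mathbf{H}_{Q/\k}$ is a self-dual graded Hopf algebra, so the space of primitives in degree $\d$ is exactly $\ker(\bar\Delta_\d)$, where $\bar\Delta$ denotes the reduced coproduct, and by self-duality this is the orthogonal complement of the degree-$\d$ decomposables $\mathbf{H}_{>0}\cdot\mathbf{H}_{>0}$; in particular $C_{Q,\d}(|\k|)=\dim\ker(\bar\Delta_\d)$. I would combine this with two classical inputs: first, the fact that every primitive element lies in the span of the classes $[M]$ with $M$ indecomposable; second, Kac's theorem, according to which an indecomposable representation of dimension $\d$ exists precisely when $\d$ is a positive root, and is unique up to isomorphism when $(\d,\d)=2$. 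Since by Theorem~\ref{T:main} each $C_{Q,\d}$ is a well-defined polynomial, it suffices to analyse these quantities.

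I would start with the support statement, so assume $\d\notin\Pi_0$ and $(\d,\d)>0$. If $\d$ is not a positive root, there are no indecomposable representations of dimension $\d$; since every primitive is supported on indecomposables, this forces $\mathbf{H}^{cusp}_{Q/\k}[\d]=0$, whence $C_{Q,\d}=0$. If instead $\d$ is a positive root, then $(\d,\d)>0$ means $\d$ is real, so there is a unique indecomposable $M_\d$ and the space of primitives is contained in the line $\mathbb{C}\,[M_\d]$. As $\d\notin\Pi_0$ we have $\sum_i d_i\ge 2$, so $M_\d$ is not simple and admits a proper nonzero submodule $N$. The term $[M_\d/N]\otimes[N]$ then occurs in $\bar\Delta([M_\d])$ with a strictly positive counting coefficient, and terms attached to distinct bidegrees cannot cancel; hence $\bar\Delta([M_\d])\neq 0$, no nonzero multiple of $[M_\d]$ is primitive, and again $C_{Q,\d}=0$. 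This settles the first assertion (in fact it yields $C_{Q,\d}=0$ for all non-roots as well).

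For the vanishing at $t=0$ when $(\d,\d)\le 0$, I would argue by induction on the height $\sum_i d_i$. The base case $\d=\epsilon_i$ is explicit: writing $g_i$ for the number of loops at $i$, the one-dimensional representations at $i$ are parametrised by $g_i$ scalars with trivial automorphism action, they are automatically cuspidal, and so $C_{Q,\epsilon_i}(t)=t^{g_i}$; this vanishes at $t=0$ exactly when $g_i\ge 1$, that is, exactly when $(\epsilon_i,\epsilon_i)=2-2g_i\le 0$, as required. For the inductive step one evaluates at $t=0$ the recursion computing $C_{Q,\d}$ from the numbers $\dim\mathbf{H}_{Q/\k}[\mathbf{e}]$ with $\mathbf{e}\le\d$ and the lower cuspidal polynomials, and one must show that the constant term of $\dim\ker(\bar\Delta_\d)$ vanishes, i.e. that $t$ divides $C_{Q,\d}(t)$.

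The main obstacle is precisely this last divisibility: establishing that the counting polynomial $C_{Q,\d}$ has no constant term for every imaginary $\d$. The difficulty is that $C_{Q,\d}(0)$ is the constant term of a polynomial rather than the evaluation of a rank at $t=0$, so it is not directly controlled by the combinatorics of the coproduct matrix. I expect the cleanest route is to identify $C_{Q,\d}(t)$ with the point-count of a moduli space of cuspidal (stable) objects which, when $(\d,\d)\le 0$, carries a free positive-dimensional scaling symmetry and hence has counting polynomial divisible by $t$; the computations $C_{Q,\epsilon_i}(t)=t^{g_i}$ and, for the Kronecker quiver, $C_{Q,(1,1)}(t)=t$ are the model cases confirming this behaviour.
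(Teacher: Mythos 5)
Your proposal has a genuine gap in each half of the statement. The first ``classical input'' --- that every primitive element lies in the span of characteristic functions of indecomposables --- is false. Already for the Jordan quiver (which is a quiver in this paper's setting): restrict to representations supported at a single point $a$ of degree one; this gives a Hopf subalgebra identified with Macdonald's ring via $u_\lambda \mapsto q^{-n(\lambda)}P_\lambda(x;q^{-1})$, and the primitive space in degree $2$ is spanned by $p_2 = P_{(2)} - (1-q^{-1})P_{(1,1)}$, which takes a nonzero value on the decomposable module $\bigl(\k[x]/(x-a)\bigr)^{\oplus 2}$. What a degeneration-order induction genuinely gives is weaker: pairing a cuspidal $f$ against products $[A]\star[B]$ (whose support consists of extensions of $A$ by $B$, with nonzero coefficient on the split one) determines the values of $f$ on decomposables in terms of its values on indecomposables, without forcing them to vanish. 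This salvages your non-root case ($\d$ not a root $\Rightarrow$ no indecomposables $\Rightarrow$ $f=0$), but in the real-root case it only bounds the cuspidal space by dimension one, with candidate of the form $[M_\d]+(\text{correction supported on decomposables})$; your computation that $\overline{\Delta}([M_\d])\neq 0$ does not exclude such a corrected element, because the correction contributes to the very same bidegrees $(\dim M_\d/N,\dim N)$ and can cancel the contribution of $[M_\d]$. The paper's argument for the support statement is quite different and one line long: by Sevenhant--Van den Bergh, $(\mathbb{Z}^{\mathcal{C}_{Q/\k}},\{\,,\,\},\Pi,\mathbf{m})$ is a Borcherds datum, so any $\d\in\mathcal{C}_{Q/\k}\setminus\Pi_0$ satisfies $(\d,\epsilon_i)\le 0$ for all $i$ (each $\epsilon_i$ lies in $\mathcal{C}_{Q/\k}$), whence $(\d,\d)=\sum_i d_i(\d,\epsilon_i)\le 0$.

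The second half, $C_{Q,\d}(0)=0$ for imaginary $\d$, is not proved at all: you correctly isolate the divisibility $t\mid C_{Q,\d}(t)$ as the crux, but the proposed resolution --- a moduli space of cuspidal objects with a free positive-dimensional scaling action --- is not available; it is essentially a form of the paper's open Conjecture on the varieties $\mathcal{X}_{Q,\d}$, so this is a restatement of the problem rather than an argument. The paper instead proves in the appendix a variant of Kac's conjecture, $A_{Q,\d}(0)=\dim(\mathfrak{g}_{Q^{re}}[\d])$ (Hausel's theorem, extended to quivers with loops via a free $\mathbb{G}_a$-action on indecomposables at a loop vertex), which identifies the full Hall character at $t=0$ with $ch(U(\mathfrak{n}_{Q^{re}}))$; then an induction on $\d$ shows $(S^{<\d}_{Q/\k})_{|t=0}=S_{Q^{re}}$, so that $\dim U(\widetilde{\mathfrak{n}}^{<\d}_{Q/\k})[\d]_{|t=0}=\dim U(\mathfrak{n}_{Q^{re}})[\d]$ and the two terms of (\ref{E:proof1}) cancel at $t=0$. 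Your height induction with base case $C_{Q,\epsilon_i}=t^{g_i}$ is compatible with this scheme, but without the $A_{Q,\d}(0)$ input (or an equivalent), the inductive step does not close.
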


\vspace{.1in}

It is easy to see that the coefficients of $C_{Q,\d}$ are in general neither integral nor positive. The aim of this note is to find a correct substitute for $C_{Q,\d}$.

\vspace{.1in}

Recall that, by a theorem of Kac \cite{Kac}, there exist polynomials $A_{Q,\mathbf{d}}, I_{Q,\mathbf{d}} \in \mathbb{Q}[t]$ such that for any $\k$ we have
$$A_{Q,\mathbf{d}}(|\k|)=\#\text{Ind}^{geom}_{Q,\mathbf{d},\k}, \qquad I_{Q,\mathbf{d}}(|\k|)=\#\text{Ind}_{Q,\mathbf{d},\k}$$
where $\text{Ind}_{Q,\mathbf{d},\k}, \text{Ind}^{geom}_{Q,\mathbf{d},\k}$ is the set of indecomposable, resp. geometrically indecomposable, representations of $Q$ over $\k$ of dimension $\mathbf{d}$. The two families of polynomials are related by the following equality of generating series involving plethystic exponentials (see Section~\textbf{1.5.})
$$\textup{Exp}_{t,z}\left( \sum_{\mathbf{d}} A_{\mathbf{d}}(t)z^{\mathbf{d}}\right)=\textup{Exp}_z\left(\sum_{\mathbf{d}}I_{\mathbf{d}}(t)z^{\mathbf{d}}\right)$$
(see \cite{Hua}).
Since the character of $\mathbf{H}_{Q/\k}$ clearly depends on the number of isomorphism classes of representations of $Q$ over $\k$ of fixed dimension, it is clear that $C_{Q,\mathbf{d}}$ is strongly related to $I_{Q,\mathbf{d}}$. In general, this relation is somewhat intricate, relying on the Borcherds denominator formula for generalized Kac-Moody algebras, see Sections~\textbf{2.2.} and \textbf{4.1.} The coefficients of the polynomial $C_{Q,\mathbf{d}}$ are in general neither positive nor integral, similarly to the Kac polynomials $I_{Q,\mathbf{d}}$. It is well-known that the better behaved Kac polynomial is the $A$-polynomial $A_{Q,\mathbf{d}}$, which is known to belong to $\mathbb{N}[t]$ (see \cite{HLRV}) and
to have some cohomological interpretation in terms of Donaldson-Thomas invariants (see \cite{Moz}).
Applying the same combinatorial process as above, but to the Kac A-polynomials $A_{Q,\mathbf{d}}$ instead of the Kac $I$-polynomials and based on a \textit{graded} analog of the Borcherds character formula yield some rational polynomials $C^{abs}_{Q,\mathbf{d}} \in \mathbb{Q}[t]$, see Section~\textbf{4.2.}  Equivalently, we may also directly define $C^{abs}_{Q,\mathbf{d}}(t)$ via the following relations~:
$$C^{abs}_{Q,\mathbf{d}}(t)=C_{Q,\mathbf{d}}(t)$$
for all $\mathbf{d} \in \mathbb{N}^I$ such that $(\mathbf{d},\mathbf{d})<0$ and
$$\textup{Exp}_z\left(\sum_{l \geq 1} C_{Q,l\mathbf{d}}(t)z^{l}\right)=\textup{Exp}_{t,z}\left(\sum_{l \geq 1} C^{abs}_{Q,l\mathbf{d}}(t)z^{l}\right)$$
 for all $\mathbf{d} \in (\mathbb{N}^I)_{prim}$ such that $(\mathbf{d},\mathbf{d})=0.$ In the above, $(\mathbb{N}^I)_{prim}$ is the set of indivisible elements of $\mathbb{N}^I$.

\vspace{.1in}

We make the following conjecture~:

\begin{conjecture}\label{C:main} For any $Q=(I,\Omega)$ and $\mathbf{d}$ we have $C^{abs}_{Q,\mathbf{d}}\in \mathbb{N}[t]$.
\end{conjecture}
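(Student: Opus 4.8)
The plan is to deduce the positivity of $C^{abs}_{Q,\mathbf{d}}$ from that of the Kac $A$-polynomials, using the fact that $C^{abs}$ stands to $A$ exactly as $C$ stands to $I$. Unwinding the defining relations, on each primitive isotropic ray one has
$$\sum_{l \geq 1} C^{abs}_{Q,l\mathbf{d}}(t)\, z^{l} = \textup{Log}_{t,z}\,\textup{Exp}_{z}\Big(\sum_{l \geq 1} C_{Q,l\mathbf{d}}(t)\, z^{l}\Big),$$
which is the very same plethystic transform $A = \textup{Log}_{t,z}\,\textup{Exp}_{z}(I)$ converting the arithmetic Kac polynomials into the geometric ones, while off the isotropic locus $C^{abs}_{Q,\mathbf{d}} = C_{Q,\mathbf{d}}$. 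The arithmetic side $C$ is not positive, so I would not work with it directly; instead I would exploit the second, equivalent description of $C^{abs}$ via the \emph{graded} Borcherds character formula of Section~\textbf{4.2.}, whose input is the series $\sum_{\mathbf{d}} A_{Q,\mathbf{d}}(t)\, z^{\mathbf{d}}$ with coefficients in $\mathbb{N}[t]$ by \cite{HLRV}.

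Concretely, the first step is to write this graded denominator identity in inverted form, expressing $\sum_{\mathbf{d}} C^{abs}_{Q,\mathbf{d}}(t)\, z^{\mathbf{d}}$ as a graded plethystic logarithm of the $A$-series corrected by the real-root (Weyl) contributions, and then to separate the regimes dictated by Proposition~\ref{P:firstprop}. Outside the cone $(\mathbf{d},\mathbf{d}) \leq 0$, and for $\mathbf{d} \in \Pi_0$, the modification along isotropic rays does not occur, so $C^{abs}_{Q,\mathbf{d}} = C_{Q,\mathbf{d}}$ is either $0$ or the single monomial counting isomorphism classes at a simple root, hence manifestly in $\mathbb{N}[t]$. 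The remaining content lies in the two imaginary regimes: the anisotropic one $(\mathbf{d},\mathbf{d}) < 0$, where $C^{abs} = C$ and positivity must be read off the $A$-based inversion, and the isotropic one $(\mathbf{d},\mathbf{d}) = 0$, where the plethystic transform above genuinely alters $C$.

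The main obstacle is that inverting a Borcherds denominator identity, like taking a plethystic logarithm, is a Möbius-type operation and does \emph{not} preserve positivity term by term: knowing only that $\sum_{\mathbf{d}} A_{Q,\mathbf{d}}(t)\, z^{\mathbf{d}} \in \mathbb{N}[t][[z]]$ is not sufficient. To overcome this I would try to realise $\sum_{\mathbf{d}} C^{abs}_{Q,\mathbf{d}}(t)\, z^{\mathbf{d}}$ as the character of the space of imaginary simple-root generators of a Borcherds (generalized Kac--Moody) algebra whose root multiplicities are the $A_{Q,\mathbf{d}}$ --- equivalently, as a space of genuinely \emph{absolutely cuspidal} cohomology classes, in the spirit of the Donaldson--Thomas interpretation of $A_{Q,\mathbf{d}}$ recalled in the introduction. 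Positivity of a graded dimension would then be automatic. Short of such a geometric realisation, on a single ray the problem reduces to the positivity of graded Witt/necklace numbers attached to the sequence $(A_{Q,l\mathbf{d}})_{l}$, which I expect can be established by a direct $t$-refined Möbius inversion once the cohomological grading is correctly installed.

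For the totally negative quivers of the abstract this programme goes through unconditionally and produces the announced closed formula. There are no real roots and the Weyl group is trivial, so the graded Borcherds denominator collapses to a pure infinite product, and its inversion becomes an explicit graded Möbius inversion of $\sum_{\mathbf{d}} A_{Q,\mathbf{d}}(t)\, z^{\mathbf{d}}$. Since almost every class is anisotropic one has $C^{abs}_{Q,\mathbf{d}} = C_{Q,\mathbf{d}}$ throughout, and the positivity of the resulting coefficients can be checked directly from $A_{Q,\mathbf{d}} \in \mathbb{N}[t]$ together with the sign bookkeeping forced by the graded exponential. I expect the isotropic directions, if present, to be the only place where the separate Witt-positivity argument of the previous paragraph is still required.
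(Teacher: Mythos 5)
The statement you are trying to prove is Conjecture~\ref{C:main}, and the paper contains no proof of it: positivity of $C^{abs}_{Q,\mathbf{d}}$ is left open. What the authors actually establish is strictly weaker, namely integrality, $C^{abs}_{Q,\mathbf{d}} \in \mathbb{Z}[t]$ (Theorem~\ref{T:main2}, proved in Section~\textbf{4.4} by an induction separating the isotropic and hyperbolic simple roots), together with the closed formula for totally negative quivers (Theorem~\ref{P:totneg}) --- which again yields only $\mathbb{Z}[t]$. Your central idea, to realise $\sum_{\mathbf{d}} C^{abs}_{Q,\mathbf{d}}(t)z^{\mathbf{d}}$ as the graded dimensions of the simple imaginary root spaces of a Borcherds algebra whose character is governed by the Kac $A$-polynomials, is precisely the strategy the authors themselves propose and leave conjectural: it is the content of Conjecture~\ref{C:main3} (Maulik--Okounkov) and of the closing remark of Section~\textbf{1.3}, where they note that Conjecture~\ref{C:main} would follow \emph{if} one could show that the Davison--Meinhardt Lie algebra $\mathfrak{g}^{BPS}_Q$ of \cite{DM} is the positive half of a Borcherds algebra. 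So your proposal is a correct identification of the expected mechanism, but it is a programme, not a proof; the phrases ``I would try to realise'' and ``I expect can be established'' mark exactly the two places where no argument currently exists.

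There is moreover a concrete false step: your claim that for totally negative quivers ``this programme goes through unconditionally'' and that positivity ``can be checked directly from $A_{Q,\mathbf{d}} \in \mathbb{N}[t]$ together with the sign bookkeeping forced by the graded exponential.'' This fails. In the totally negative case the denominator identity does collapse to equation (\ref{E:totneg}), but its inversion, written out in Corollary~\ref{C:totneg}, carries the genuinely alternating signs $(-1)^{-1+\sum_{l,\mathbf{n}} p(l,\mathbf{n})}$, and positivity of the resulting coefficients does \emph{not} follow from positivity of the $A_{Q,\mathbf{n}}$ (this is the same phenomenon you correctly flag earlier: plethystic logarithms are M\"obius-type operations). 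That the explicit rational expressions for $C^{abs}_{S_g,d}$ in Example v) have nonnegative coefficients is an observation supporting the conjecture, not a consequence of sign bookkeeping; the paper accordingly deduces only integrality there. The same issue undermines your treatment of the two remaining regimes: in the anisotropic regime $(\mathbf{d},\mathbf{d})<0$ you offer no argument beyond ``read off the $A$-based inversion,'' and on isotropic rays the transform $\sum_l C^{abs}_{Q,l\mathbf{d}}z^l = \textup{Log}_{t,z}\textup{Exp}_z\bigl(\sum_l C_{Q,l\mathbf{d}}z^l\bigr)$ (which you state correctly, matching Proposition~\ref{P:relcusp}) takes as input the series $C_{Q,l\mathbf{d}}$, which is itself non-positive --- see Example ii), where the Jordan-quiver series involves M\"obius factors $\mu(l)$ --- so a ``$t$-refined Witt/necklace positivity'' would need a substantive new idea, not merely a correctly installed grading.
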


\vspace{.1in}

As a first step towards the above conjecture, we prove (see Section~\textbf{4.4})~:

\begin{theorem}\label{T:main2} For any $Q$ and any $\mathbf{d}\in \mathcal{C}_{Q/\k}$ we have $C^{abs}_{Q,\mathbf{d}}(t) \in \mathbb{Z}[t]$.
\end{theorem}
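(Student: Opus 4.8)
The plan is to work directly from the definition of $C^{abs}$ as the output of the \emph{graded} Borcherds character formula applied to the Kac $A$-polynomials (Section~4.2), and to show that this process preserves $\mathbb{Z}[t]$-integrality on the relevant cone. The crucial structural observation is that $\textup{Exp}_{t,z}$ and $\textup{Log}_{t,z}$ are the exponential and logarithm for the $\lambda$-ring structure on $\mathbb{Z}[t]$ with Adams operations $\psi_n(f)(t)=f(t^n)$, whereas $\textup{Exp}_z$ treats $t$ as a formal scalar. I would first isolate an integrality lemma: $\textup{Log}_{t,z}$ maps $1+z\,\mathbb{Z}[t][[z]]$ into $z\,\mathbb{Z}[t][[z]]$, and $\textup{Exp}_{t,z}$ preserves $\mathbb{Z}[t]$-integrality. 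The former is Dwork's congruence criterion: for a prime $p$ one has $\psi_p(f)(t)=f(t^p)\equiv f(t)^p \pmod{p}$ in $\mathbb{F}_p[t]$, so the ghost components of any series in $1+z\,\mathbb{Z}[t][[z]]$ satisfy the congruences forcing its plethystic logarithm to have coefficients in $\mathbb{Z}[t]$.

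Next I would split according to the type of $\mathbf{d}\in\mathcal{C}_{Q/\k}$. For anisotropic $\mathbf{d}$, i.e. $(\mathbf{d},\mathbf{d})<0$, one has $C^{abs}_{Q,\mathbf{d}}=C_{Q,\mathbf{d}}$, and I would use that the graded analogue of the Borcherds character formula expresses $C^{abs}_{Q,\mathbf{d}}$ on the fundamental region as an integer-coefficient combination of the $A$-polynomials $A_{Q,\mathbf{e}}$: on $\mathcal{C}_{Q/\k}$ the Weyl-group and real-root contributions to the denominator are trivial, leaving only the imaginary simple-root data together with signs $\pm 1$. Since $A_{Q,\mathbf{e}}\in\mathbb{N}[t]$ by \cite{HLRV}, this already gives $C^{abs}_{Q,\mathbf{d}}\in\mathbb{Z}[t]$ in the anisotropic case, and moreover shows a posteriori that the non-integrality of $C_{Q,\mathbf{d}}$ is confined to isotropic rays.

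For isotropic $\mathbf{d}=l\mathbf{d}_0$ with $(\mathbf{d}_0,\mathbf{d}_0)=0$ and $\mathbf{d}_0$ primitive, the vectors $C^{abs}$ along the ray are determined by $\sum_{l\ge 1}C^{abs}_{Q,l\mathbf{d}_0}z^l=\textup{Log}_{t,z}(G)$, where $G=\textup{Exp}_z(\sum_{l\ge 1}C_{Q,l\mathbf{d}_0}z^l)$. By the integrality lemma it suffices to prove that $G\in 1+z\,\mathbb{Z}[t][[z]]$. Although the individual $C_{Q,l\mathbf{d}_0}$ need not lie in $\mathbb{Z}[t]$, I would identify $G$, via the Borcherds character formula restricted to the isotropic ray $\mathbb{N}\mathbf{d}_0$ (where again the Weyl group acts trivially), with the image under $\textup{Exp}_{t,z}$ of an integral series built from the polynomials $A_{Q,l\mathbf{d}_0}$. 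Since $\textup{Exp}_{t,z}$ preserves $\mathbb{Z}[t]$-integrality and $A_{Q,l\mathbf{d}_0}\in\mathbb{N}[t]$, this yields $G\in 1+z\,\mathbb{Z}[t][[z]]$ and hence $C^{abs}_{Q,l\mathbf{d}_0}\in\mathbb{Z}[t]$.

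The main obstacle is precisely this last identification of $G$ as an integral $\textup{Exp}_{t,z}$-series. It requires the explicit form of the graded Weyl-Kac-Borcherds denominator in the fundamental region and a careful comparison, along $\mathbb{N}\mathbf{d}_0$, of the two plethysms $\textup{Exp}_z$ and $\textup{Exp}_{t,z}$ in the spirit of Hua's formula relating $A_{Q,\mathbf{d}}$ and $I_{Q,\mathbf{d}}$. Controlling the real-root contributions so that they drop out is the delicate point, and it is exactly here that the hypothesis $\mathbf{d}\in\mathcal{C}_{Q/\k}$ is used; once the integrality of $G$ is established, the conclusion follows formally from Dwork's criterion.
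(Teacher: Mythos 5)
Your $\lambda$-ring integrality lemma is correct as stated ($\textup{Exp}_{t,z}$ and $\textup{Log}_{t,z}$ with Adams operations $\psi_n(f)(t)=f(t^n)$ preserve $\mathbb{Z}[t]$-coefficients, by Dwork-type congruences), and your anisotropic/isotropic split matches the architecture of the paper's proof; but both of your key identifications have genuine gaps. In the anisotropic case, the claim that for $\mathbf{d}\in\mathcal{C}_{Q/\k}$ ``the Weyl-group and real-root contributions to the denominator are trivial'' is false for a general quiver: it holds exactly in the totally negative case (no real simple roots, $W=\{1\}$, all simple roots hyperbolic --- this is how Theorem~\ref{P:totneg} is proved), but once $Q$ has real vertices a hyperbolic cuspidal $\mathbf{d}$ may dominate real roots and isotropic cuspidal dimensions, and then the coefficient of $z^{\mathbf{d}}$ in the inverse of $\sum_w \epsilon(w)z^{\rho-w\rho}w(S^{\mathbb{N}}_{\mathbf{m}})$ genuinely involves the $w\neq 1$ terms and, more seriously, the isotropic charges --- which are the only possible source of non-integrality. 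So the hyperbolic case cannot be settled first, independently of the isotropic one; in the paper it comes \emph{last}, by induction on $\mathbf{d}$: one substitutes into $ch(U(\mathfrak{n}_{\mathbf{1}}))=\sum_D d_Dz^D$ (all charges equal to $1$, so $d_D\in\mathbb{N}$ and $d_D=1$ for $D=1.\mathbf{d}$) the series $m_{\mathbf{d}}(t)z^{\mathbf{d}}$ at hyperbolic simple roots and $\prod_j(1-t^jz^{\mathbf{d}})^{m_{\mathbf{d},j}}-1$ at isotropic ones, the latter being integral only because the isotropic case has already been proven.

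For the isotropic case, the step you yourself flag as ``the main obstacle'' is precisely the missing idea, and your guess about it is off: the real-root contributions along an isotropic ray do \emph{not} drop out, they \emph{factor} out. The paper's key input is an orthogonality lemma: if $\mathbf{d}\in\mathcal{C}^{iso}_{Q/\k}$ and $\mathbf{d}=\sum_p n_p\mathbf{d}_p+\sum_q n'_q\epsilon_{i_q}$ with $\mathbf{d}_p\in\mathcal{C}^{im}_{Q/\k}$ and $i_q\in I^{re}$, then, since the cuspidal dimensions are the simple roots of a Borcherds datum (all off-diagonal pairings $\leq 0$), expanding $0=(\mathbf{d},\mathbf{d})$ forces \emph{all} mutual pairings to vanish. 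This is what the hypothesis $\mathbf{d}\in\mathcal{C}_{Q/\k}$ actually buys, and it yields the factorization $\dim U(\mathfrak{n}_{\mathbf{m}})[D]=\dim U(\mathfrak{n}_{Q^{re}})[\mathbf{d}^{re}]\prod_p\dim U(\mathfrak{n}_{\mathbf{m}})[n_p\mathbf{d}_p]$, whence, after invoking Proposition~\ref{P:relcusp}, the identity $\left\{\textup{Exp}_{t,z}\left(\sum_{\mathbf{d}}A_{\mathbf{d}}(t)z^{\mathbf{d}}\right)\right\}_{iso}=\left\{ch(U(\mathfrak{n}_{Q^{re}}))\times\textup{Exp}_{t,z}\left(\sum_{\mathbf{d}\in\mathcal{C}^{iso}_{Q/\k}}C^{abs}_{\mathbf{d}}z^{\mathbf{d}}\right)\right\}_{iso}$; since the left-hand side lies in $\mathbb{N}[t][[z_i]]_{i\in I}$ and $ch(U(\mathfrak{n}_{Q^{re}}))$ has $t$-independent coefficients in $\mathbb{N}$, one extracts $C^{abs}_{\mathbf{d}}\in\mathbb{Z}[t]$ by induction, with no need for the (harder) $\textup{Log}$ direction of your lemma. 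Note also that decompositions of $l\mathbf{d}_0$ may involve isotropic cuspidal dimensions \emph{not} proportional to $\mathbf{d}_0$ together with real roots, so one cannot work one primitive ray at a time as you propose; the induction must run over the whole isotropic cone at once. Without the orthogonality lemma, your identification of $G$ as an integral $\textup{Exp}_{t,z}$-series has no proof, and the argument as written does not close.
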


\vspace{.1in}

Theorem~\ref{T:main} is a quiver analog of a conjecture of Deligne and Kontsevich which asserts that the dimension of the space of geometrically cuspidal functions on $Bun_{GL_{r,d}}(X)$ for $X$ a smooth projective curve defined over a finite field $\k$, is given by a polynomial in the Weil numbers of $X$ (which only depends on $r$), see \cite{Deligne}, \cite{Kont}\footnote{More precisely, the conjecture of Deligne and Kontsevich is phrased in terms of counting geometrically irreducible $\overline{\mathbb{Q}_l}$-representations of $\pi_1^{arith}(X)$; the two problems are equivalent by the Langlands correspondence, \cite{Lafforgue}}. Note that in the context of curves there is, via the Langlands correspondence, a geometric way of characterizing absolutely (or geometrically) cuspidal functions among all cuspidal functions; since there is yet no analogue of the (geometric) Langlands correspondence in the context of quivers, we have no choice but to define the numbers $C^{abs}_{Q,\mathbf{d}}$ in the above slightly \textit{ad-hoc} combinatorial way.

\vspace{.1in}

Following Deligne, we may also make the following somewhat imprecise conjecture~:

\vspace{.1in}

\begin{conjecture}\label{C:main2} For any $Q=(I,\Omega)$ and $\mathbf{d}$ there exists a 'natural' algebraic variety $\mathcal{X}_{Q,\mathbf{d}}$ defined over $\mathbb{Z}$ such that we have $C^{abs}_{Q,\mathbf{d}}(|\k|)=\#\mathcal{X}_{Q,\mathbf{d}}(\k)$ for any finite field $\k$.
\end{conjecture}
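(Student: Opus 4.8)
The plan is to \emph{categorify} the combinatorial definition of $C^{abs}_{Q,\mathbf{d}}$ by lifting every step of its construction from $\mathbb{Q}[t]$ to the Grothendieck ring of varieties $K_0(\mathrm{Var}_{\mathbb{Z}})$, equipped with its natural $\lambda$-ring structure for which the plethystic exponential $\textup{Exp}_{t,z}$ is realized by symmetric-power (configuration) operations. The aim is to produce an effective class $[\mathcal{X}_{Q,\mathbf{d}}]$ whose image under the point-counting homomorphism $[\,\cdot\,]\mapsto \#(\,\cdot\,)(\k)$ is $C^{abs}_{Q,\mathbf{d}}(|\k|)$. Since $C^{abs}_{Q,\mathbf{d}}$ is assembled from the Kac $A$-polynomials by the same graded Borcherds-type recipe that produces $C_{Q,\mathbf{d}}$ from the $I$-polynomials (Section \textbf{4.2.}), the first task is to replace $A_{Q,\mathbf{d}}$ itself by a genuine motivic class.

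First I would lift the $A$-polynomials. By Hausel--Letellier--Rodriguez-Villegas \cite{HLRV} and Mozgovoy \cite{Moz}, $A_{Q,\mathbf{d}}(t)$ is not merely in $\mathbb{N}[t]$ but is the point-count polynomial of a concrete geometry: for indivisible $\mathbf{d}$ the Nakajima quiver variety attached to the doubled quiver, and in general (by Davison's identification of Kac polynomials with BPS cohomology) the $\mathrm{Frob}$-pure intersection cohomology of the coarse moduli space of representations of the associated preprojective algebra. This furnishes, after spreading out over some $\mathbb{Z}[1/N]$, an effective polynomial-count class $[\mathcal{A}_{Q,\mathbf{d}}]$ with $\#\mathcal{A}_{Q,\mathbf{d}}(\k)=A_{Q,\mathbf{d}}(|\k|)$.

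Next I would run the Borcherds recipe \emph{inside} the $\lambda$-ring. The passage from $A$ to $C^{abs}$ is, schematically, a plethystic logarithm of a product over $\mathbb{N}^I$ weighted by the symmetrized Euler form $(\,,\,)$, the graded analogue of the denominator identity for the relevant generalized Kac--Moody algebra. Plethystic exponentials being symmetric products, the \emph{forward} direction is automatically effective; the whole content lies in showing that the plethystic \emph{logarithm} — which a priori lands only in the completed ring of virtual motives — is represented by an \emph{honest} variety. The natural tool is the cohomological integrality theorem of Davison--Meinhardt: the total cohomology of the stack is a free supercommutative algebra on a BPS cohomology, and that BPS cohomology is the (pure, and here conjecturally Tate) intersection cohomology of a well-defined moduli space. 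Matching $C^{abs}_{Q,\mathbf{d}}$ with the Poincaré polynomial of this BPS cohomology would exhibit the BPS moduli as the sought-for $\mathcal{X}_{Q,\mathbf{d}}$, defined over $\mathbb{Z}[1/N]$ by descent of the moduli functor.

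The hard part will be effectivity and purity precisely along the imaginary directions, and most acutely in the isotropic case $(\mathbf{d},\mathbf{d})=0$, where the definition of $C^{abs}$ departs from that of $C$ through the extra plethystic factor in the variable $t$. Geometrically this is where the moduli of the minimal imaginary root typically acquires an abelian-variety (elliptic-curve) factor, so neither Tate-type purity nor descent to $\mathbb{Z}$ is automatic, and the BPS sheaf must be analyzed stratum by stratum; controlling this is exactly the geometric shadow of the positivity Conjecture \ref{C:main}, which asserts that the log-class has non-negative coefficients and hence a chance of being effective. A secondary obstacle is canonicity: the conjecture demands a \emph{natural} $\mathcal{X}_{Q,\mathbf{d}}$, so I would want the construction to be independent of the auxiliary stability and of the orientation $\Omega$ (as $C^{abs}$ itself is, by Theorem \ref{T:main}), which requires comparing the BPS moduli across wall-crossings and reorientations through the Hall-algebra and cohomological Hall algebra formalism.
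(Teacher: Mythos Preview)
The statement you are attempting to prove is Conjecture~\ref{C:main2}, and the paper does \emph{not} prove it. It is explicitly presented as a conjecture, with the authors remarking immediately afterwards that it is ``rather optimistic'' and that even weaker versions (existence of $\mathcal{X}_{Q,\mathbf{d},p}$ over a fixed or large enough characteristic $p$) remain open. There is therefore no proof in the paper to compare your proposal against.

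What you have written is not a proof but a research programme, and you acknowledge this yourself: you identify the ``hard part'' (effectivity and purity along imaginary, especially isotropic, directions) and a ``secondary obstacle'' (canonicity across stabilities and orientations), without resolving either. The central difficulty you flag --- that the plethystic logarithm lands a priori only in virtual motives and one must show the resulting class is effective --- is essentially a restatement of the conjecture rather than a reduction of it. Invoking Davison--Meinhardt cohomological integrality is a reasonable heuristic, but as the paper notes in Section~\textbf{1.3}, the relevant step (that $\mathfrak{g}^{BPS}_Q$ is the positive half of a Borcherds algebra, which would imply Conjecture~\ref{C:main}) is itself open, and even granting Conjecture~\ref{C:main} one does not immediately obtain an algebraic variety over $\mathbb{Z}$ with the required point counts. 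Your proposal is a plausible outline of where a proof might come from, but it does not constitute one.
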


The above conjecture is rather optimistic; a slightly less optimistic conjecture would claim the existence of such a variety $\mathcal{X}_{Q,\mathbf{d},p}$ for any fixed characteristic $p>0$, or even for any large enough fixed characteristic $p$.

\vspace{.2in}

\noindent
\textbf{1.2.} \textit{Examples.} Let us give some simple examples of polynomials $C^{abs}_{Q,\mathbf{d}}, C_{Q,\mathbf{d}}$.  As usual, we denote by
$\langle \;,\; \rangle: \mathbb{Z}^I \times \mathbb{Z}^I \to \mathbb{Z}$ the (nonsymmetrized) Euler form of $Q$. 
\begin{enumerate}
\item[i)] If $Q$ is a finite type quiver then $C_{Q,\mathbf{d}}=C^{abs}_{Q,\mathbf{d}}=1$ if $\mathbf{d} \in \{\epsilon_i\;|\;i \in I\}$ and $C_{Q,\mathbf{d}}=C^{abs}_{Q,\mathbf{d}}=0$ otherwise,

\item[ii)] If $Q$ is a Jordan quiver then $C^{abs}_{Q,d}=t$ for any $d \geq 1$ while the polynomials $C_{Q,d}$ are determined by the following generating series~:
$$\sum_d C_{Q,d} z^d=\sum_{l,n \geq 1} \frac{\mu(l)}{ln} \frac{t^nz^{ln}}{1-z^{ln}},$$
where $\mu$ is the M\"obius function\footnote{in terms of plethystic functions, we may also write $\sum_d C_{Q,d} z^d=\textup{Log}_z\textup{Exp}_{t,z}\left(tz(1-z)^{-1}\right)$}.

\item[iii)] If $Q$ is an affine Dynkin diagram, with indivisible imaginary root $\delta$, then $C^{abs}_{Q,\mathbf{d}}=C_{Q,\mathbf{d}}=1$ if $\mathbf{d}\in \{\epsilon_i\;|\; i \in I\}$, $C^{abs}_{Q,l\delta}=t$ for any $l \geq 1$; the polynomials $C_{Q,l\delta}$ are again determined by the generating series~:
$$\sum_{d} C_{Q,d\delta} z^d=\sum_{l,n \geq 1} \frac{\mu(l)}{ln} \frac{t^nz^{ln}}{1-z^{ln}},$$
and
$C^{abs}_{Q,\mathbf{d}}=C_{Q,\mathbf{d}}=0$ if $\mathbf{d} \not\in \mathbb{N}\delta \cup \{\epsilon_i\;|\; i \in I\}$.

\item[iv)] If $Q$ is the $3$-Kronecker quiver, i.e. $I=\{1,2\}$ and $\Omega=\{h_1, h_2, h_3\}$ with $h_i : 1 \mapsto 2$ for all $i$, then
$C_{Q,\mathbf{d}}=C^{abs}_{Q,\mathbf{d}}$ for all $\mathbf{d}$ and furthermore
$$C_{Q,(1,1)}=t^2+t, \qquad C_{Q,(2,2)}=t^5+t^4+2t^3+t^2+t, \qquad C_{Q,(2,3)}=t^6+t^4+t^2,$$
$$C_{Q,(3,3)}=t^{10} + t^{9} + 3t^8 + 4t^7 + 4t^6 + 6t^5 + 6t^4 + 4t^3+32t^2+31t.$$

\item[v)]If $Q=S_g$ is the quiver with one vertex and $g$ loops, with $g >1$ then $C^{abs}_{Q,d}=C_{Q,d}$ for all $d$, and are determined by the following generating series~:
\begin{equation*}
\begin{split}
\textup{Log}_{t,z}&\left(1-\sum_d C^{abs}_{Q,d} z^d\right)\\
&=(t-1)\textup{Log}_{t,z}\left(\sum_{\lambda} \left[t^{(g-1)\sum_k \lambda^2_k} \prod_{k} [\infty,\lambda_k-\lambda_{k+1}]_{t^{-1}}z^{|\lambda|}\right]\right)
\end{split}
\end{equation*}
where the sum ranges over all partitions $\lambda$, and $[\infty,a]_u=\prod_{l=1}^a(1-u^l)^{-1}$ is the usual Pochammer symbol.
In particular, we have
\begin{align*}
C^{abs}_{S_g,1}&=t^g\\
C^{abs}_{S_g,2}&=t^{2g-1}\frac{t^{2g}-1}{t^2-1}\\
C^{abs}_{S_g,3}&=\frac{t^{9g-3}-t^{5g+2}-t^{5g-2}-t^{5g-3}+t^{3g+2}+t^{3g-2}}{(t^2-1)(t^3-1)}.
\end{align*}

\item[vi)] In general, we have $C^{abs}_{Q,\mathbf{d}}=C_{Q,\mathbf{d}}=t^{g_i}$ if $\mathbf d \in \{\epsilon_i\;|\; i \in I\}$, and $g_i$ is the number of loops at $i$; in addition, $C^{abs}_{Q,\mathbf{d}}=C_{Q,\mathbf{d}}=0$ if $\mathbf{d}$ is not an imaginary weight (i.e. if $\langle\mathbf{d}, \mathbf{d}\rangle >0$), unless $\d \in \{\epsilon_i\;|\; i \in I\}$.
\end{enumerate}

\vspace{.1in}

In cases ii) and iii) above we may take $\mathcal{X}_{Q,l}$ and $\mathcal{X}_{Q,l\delta}$ equal to $\mathbb{A}^1$ (for any $l$). Case ii) is
classical, case iii) is due to Hua-Xiao, see \cite[Prop. 5.1.1]{HX} while case iv) is due to Deng-Xiao, see \cite[4.2, Ex. 2]{DX}.

\vspace{.1in}

As mentioned above, the relation between $C_{Q,\mathbf{d}}, C^{abs}_{Q,\mathbf{d}}$ and the Kac polynomials is somewhat subtle. There is however one special case in which this relation is easily expressed. Call a quiver $Q=(I,\Omega)$ \textit{totally negative} if the symmetrized Euler form $(\;,\;)$ is totally negative, i.e. if $( \d,\mathbf{n}) <0$ for all $\d,\mathbf{n} \in \mathbb{N}^I \backslash\{0\}$. It is equivalent to the condition that any two vertices are connected by an edge, and that any vertex carries at least two edge loops. For instance, quivers $S_g$ for $g >1$ are totally negative.
 
\vspace{.1in}

\begin{theorem}\label{P:totneg} Let $Q$ be a totally negative quiver. Then we have $C_{Q,\mathbf{d}}(t)=C^{abs}_{Q,\mathbf{d}}(t)$ for any $\mathbf{d} \in \mathbb{N}^I$, and the following relation of generating series holds~:
\begin{equation}\label{E:totneg}
1- \sum_{\mathbf{d}>0} C_{Q,\mathbf{d}}(t) z^{\d}=\mathrm{Exp}_{t,z}\bigg(-\sum_{\mathbf{d}>0} A_{Q,\mathbf{d}}(t)z^{\d}\bigg).
\end{equation}
In particular, we have $C_{Q,\d}^{abs}(t) \in \mathbb{Z}[t]$ for any $\d$.
\end{theorem}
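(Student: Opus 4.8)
The plan is to establish the three assertions in turn, the first and third being formal and the middle one carrying the content.

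For the equality $C_{Q,\mathbf{d}}=C^{abs}_{Q,\mathbf{d}}$ I would simply read off the definition of $C^{abs}$. Putting $\mathbf{n}=\mathbf{d}$ in the defining inequality of a totally negative quiver yields $(\mathbf{d},\mathbf{d})<0$ for every $\mathbf{d}\in\mathbb{N}^I\setminus\{0\}$; thus every dimension vector lands in the regime $(\mathbf{d},\mathbf{d})<0$ in which $C^{abs}_{Q,\mathbf{d}}$ is by definition equal to $C_{Q,\mathbf{d}}$, while the isotropic case $(\mathbf{d},\mathbf{d})=0$ of the definition never arises. This disposes of the first claim.

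The core step is to specialize, to the totally negative setting, the relation between $C_{Q,\mathbf{d}}$ and the Kac polynomial $I_{Q,\mathbf{d}}$ that underlies the proof of Theorem~\ref{T:main}. I would use two inputs: first, that Krull--Schmidt gives $\mathrm{ch}(\mathbf{H}_{Q/\k})=\mathrm{Exp}_z\big(\sum_{\mathbf{d}}I_{Q,\mathbf{d}}(|\k|)z^{\mathbf{d}}\big)$; second, that $\mathbf{H}_{Q/\k}$ carries the structure of (the positive part of) the enveloping algebra of a generalized Kac--Moody algebra $\mathfrak{g}$ with root multiplicities $\dim\mathfrak{g}_{\mathbf{d}}=I_{Q,\mathbf{d}}(|\k|)$ and with $C_{Q,\mathbf{d}}(|\k|)$ imaginary simple roots in degree $\mathbf{d}$. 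For a totally negative quiver each vertex has at least two loops, so $(\epsilon_i,\epsilon_i)\leq-2$ and $\mathfrak{g}$ has no real root; its Weyl group is therefore trivial. Moreover $(\mathbf{d},\mathbf{n})<0$ for all nonzero $\mathbf{d},\mathbf{n}$, so no two imaginary simple roots are orthogonal and none is isotropic. In the Borcherds denominator identity the Weyl summation then vanishes and the correction factor — an inclusion--exclusion over finite sets of pairwise orthogonal imaginary simple roots — keeps only the empty set and the singletons, the latter contributing $-C_{Q,\mathbf{d}}(|\k|)z^{\mathbf{d}}$ in degree $\mathbf{d}$. This would give
\begin{equation*}
1-\sum_{\mathbf{d}>0}C_{Q,\mathbf{d}}(t)z^{\mathbf{d}}=\prod_{\mathbf{d}>0}(1-z^{\mathbf{d}})^{I_{Q,\mathbf{d}}(t)}=\mathrm{Exp}_z\bigg(-\sum_{\mathbf{d}>0}I_{Q,\mathbf{d}}(t)z^{\mathbf{d}}\bigg),
\end{equation*}
first for each $\k$ and hence, by polynomiality, as an identity in $\mathbb{Q}[t][[z]]$.

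It then remains to trade $\mathrm{Exp}_z$ and $I_{Q,\mathbf{d}}$ for $\mathrm{Exp}_{t,z}$ and $A_{Q,\mathbf{d}}$, which is purely formal. Since $\mathrm{Exp}$ is a homomorphism from the additive to the multiplicative group of power series one has $\mathrm{Exp}(-f)=\mathrm{Exp}(f)^{-1}$, so inverting the Hua identity relating $A_{Q,\mathbf{d}}$ and $I_{Q,\mathbf{d}}$ recalled in the introduction gives $\mathrm{Exp}_z\big(-\sum A_{Q,\mathbf{d}}z^{\mathbf{d}}\big)$... more precisely $\mathrm{Exp}_z\big(-\sum I_{Q,\mathbf{d}}z^{\mathbf{d}}\big)=\mathrm{Exp}_{t,z}\big(-\sum A_{Q,\mathbf{d}}z^{\mathbf{d}}\big)$; chaining with the display above produces exactly \eqref{E:totneg}. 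For integrality I would invoke $A_{Q,\mathbf{d}}\in\mathbb{N}[t]$ from \cite{HLRV}: then $\mathrm{Exp}_{t,z}\big(-\sum A_{Q,\mathbf{d}}z^{\mathbf{d}}\big)$ is a product of factors $(1-t^{j}z^{\mathbf{d}})^{a}$ with $a\in\mathbb{Z}_{\geq0}$, hence lies in $\mathbb{Z}[t][[z]]$, and extracting the coefficient of $z^{\mathbf{d}}$ yields $C_{Q,\mathbf{d}}=C^{abs}_{Q,\mathbf{d}}\in\mathbb{Z}[t]$.

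The step I expect to require the most care is the collapse of the Borcherds denominator identity: one must check that total negativity genuinely annihilates every higher term of the correction factor, i.e. that there is neither a pair of orthogonal imaginary simple roots nor an isotropic one, since any such term would introduce a monomial $z^{\mathbf{d}+\mathbf{n}}$ and spoil the clean shape $1-\sum_{\mathbf{d}>0}C_{Q,\mathbf{d}}z^{\mathbf{d}}$. This is precisely where the hypothesis enters, as the contrast with the Jordan quiver of Example~ii) — where $(\epsilon_1,\epsilon_1)=0$ is isotropic and $C\neq C^{abs}$ — makes clear.
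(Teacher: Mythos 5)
Your proposal is correct and takes essentially the same route as the paper: both arguments rest on the Sevenhant--Van den Bergh theorem, the observation that total negativity forces every simple root of $\widetilde{\mathfrak{g}}_{Q/\k}$ to be hyperbolic with no orthogonal pairs (whence $C_{Q,\mathbf{d}}=C^{abs}_{Q,\mathbf{d}}$), the resulting character identity $\mathrm{ch}(\mathbf{H}_{Q/\k})=\bigl(1-\sum_{\mathbf{d}>0}C_{Q,\mathbf{d}}(|\k|)z^{\mathbf{d}}\bigr)^{-1}$, and then Hua's identity together with polynomiality in $|\k|$ and $A_{Q,\mathbf{d}}\in\mathbb{N}[t]$ to get \eqref{E:totneg} and integrality. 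The only (immaterial) difference is how the key identity is justified: the paper notes that $\widetilde{\mathfrak{n}}_{Q/\k}$ is a free Lie algebra, so $U(\widetilde{\mathfrak{n}}_{Q/\k})$ is free associative with Hilbert series of the stated form, whereas you collapse the Borcherds denominator formula (trivial Weyl group, only the empty tuple and singletons surviving in $S_{\mathbf{m}}$) --- two equivalent phrasings of the same fact.
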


\vspace{.1in}

Example iv) above is a direct application of (\ref{E:totneg}) together with Hua's explicit formula for Kac polynomials, see \cite{Hua}. For a general quiver,
injecting Hua's formula in (\ref{E:totneg}) yields a rather cumbersome formula. In any case, we may rephrase (\ref{E:totneg}) in terms of plethystic exponential
as a closed formula for $C^{abs}_{Q,\d}$~:

\begin{corollary}\label{C:totneg} Assume that $Q$ is totally negative. Then for any $\mathbf{d}$ we have
$$C_{Q,\mathbf{d}}(t)=\sum_{p} (-1)^{-1+\sum_{l,\mathbf{n}} p(l,\mathbf{n})} \prod_{l,\mathbf{n}} \frac{(A_{\mathbf{n}}(t^l))^{p(l,\mathbf{n})}}{p(l,\mathbf{n})!\;l^{p(l,\mathbf{n})}}$$
where the sum ranges over all maps $p: \mathbb{N} \times \mathbb{N}^I \to \mathbb{N}$ such that $\sum_{l,\mathbf{n}} p(l,\mathbf{n})l\mathbf{n}=\mathbf{d}$.
\end{corollary}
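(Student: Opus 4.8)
The plan is to deduce the corollary from Theorem~\ref{P:totneg} by expanding the plethystic exponential on the right-hand side of~(\ref{E:totneg}) into its monomials in $z$. First I would unwind the definition of $\mathrm{Exp}_{t,z}$ recalled in Section~\textbf{1.5.}: for a series $f=\sum_{\mathbf{n}>0}a_{\mathbf{n}}(t)z^{\mathbf{n}}$ without constant term one has $\mathrm{Exp}_{t,z}(f)=\exp\big(\sum_{l\geq 1}\tfrac{1}{l}\,\psi_l(f)\big)$, where $\exp$ is the ordinary exponential and $\psi_l$ is the $l$-th Adams operation acting by $t\mapsto t^l$ and $z^{\mathbf{n}}\mapsto z^{l\mathbf{n}}$. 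It is precisely the action of $\psi_l$ on the variable $t$ that distinguishes $\mathrm{Exp}_{t,z}$ from $\mathrm{Exp}_z$, and that will produce the shifted arguments $A_{Q,\mathbf{n}}(t^l)$ in the final formula.

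Applying this to $f=-\sum_{\mathbf{d}>0}A_{Q,\mathbf{d}}(t)z^{\mathbf{d}}$ gives $\psi_l(f)=-\sum_{\mathbf{n}>0}A_{Q,\mathbf{n}}(t^l)z^{l\mathbf{n}}$, hence
\[
\mathrm{Exp}_{t,z}\Big(-\sum_{\mathbf{d}>0}A_{Q,\mathbf{d}}(t)z^{\mathbf{d}}\Big)=\exp\Big(-\sum_{l\geq 1}\sum_{\mathbf{n}>0}\tfrac{1}{l}\,A_{Q,\mathbf{n}}(t^l)\,z^{l\mathbf{n}}\Big).
\]
I would then expand the ordinary exponential as a product over the pairs $(l,\mathbf{n})$ with $l\geq 1$, $\mathbf{n}>0$, and expand each factor $\exp\!\big(-\tfrac{1}{l}A_{Q,\mathbf{n}}(t^l)z^{l\mathbf{n}}\big)$ into its power series. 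Indexing the resulting monomials by the multiplicity function $p(l,\mathbf{n})\in\NN$ that records the exponent chosen in the factor attached to $(l,\mathbf{n})$, each choice contributes
\[
\prod_{l,\mathbf{n}}\frac{1}{p(l,\mathbf{n})!}\Big(-\tfrac{1}{l}A_{Q,\mathbf{n}}(t^l)\Big)^{p(l,\mathbf{n})}z^{p(l,\mathbf{n})\,l\mathbf{n}}=(-1)^{\sum_{l,\mathbf{n}}p(l,\mathbf{n})}\bigg(\prod_{l,\mathbf{n}}\frac{(A_{Q,\mathbf{n}}(t^l))^{p(l,\mathbf{n})}}{p(l,\mathbf{n})!\;l^{p(l,\mathbf{n})}}\bigg)z^{\sum_{l,\mathbf{n}}p(l,\mathbf{n})\,l\mathbf{n}}.
\]
For a fixed target $\mathbf{d}$ only finitely many $p$ satisfy $\sum_{l,\mathbf{n}}p(l,\mathbf{n})\,l\mathbf{n}=\mathbf{d}$, so the rearrangement is a legitimate identity of formal power series graded by $\NN^I$.

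To finish, I would compare the coefficients of $z^{\mathbf{d}}$ on the two sides of~(\ref{E:totneg}) for $\mathbf{d}>0$. The left-hand side $1-\sum_{\mathbf{d}>0}C_{Q,\mathbf{d}}(t)z^{\mathbf{d}}$ contributes $-C_{Q,\mathbf{d}}(t)$, while the right-hand side contributes the sum of the monomials above over all $p$ with $\sum_{l,\mathbf{n}}p(l,\mathbf{n})\,l\mathbf{n}=\mathbf{d}$. Multiplying through by $-1$ absorbs one factor of $-1$ into the sign, turning $(-1)^{\sum p(l,\mathbf{n})}$ into $(-1)^{-1+\sum p(l,\mathbf{n})}$, and yields exactly the asserted formula (recall that $C_{Q,\mathbf{d}}=C^{abs}_{Q,\mathbf{d}}$ in the totally negative case by Theorem~\ref{P:totneg}). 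The argument is really a bookkeeping computation; the only points demanding care are the sign and constant-term accounting just described — keeping track of the leading $1$ and of the overall minus sign in~(\ref{E:totneg}) — and the verification that it is the $t$-Adams operation, and not the $z$-only operation of $\mathrm{Exp}_z$, that enters $\mathrm{Exp}_{t,z}$, so that $A_{Q,\mathbf{n}}$ is evaluated at $t^l$ rather than $t$. Since all series are formal and have finite-dimensional graded pieces, there is no genuine analytic or algebraic obstacle.
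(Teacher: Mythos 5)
Your proof is correct and follows exactly the route the paper intends: the corollary is stated there as a direct rephrasing of~(\ref{E:totneg}), obtained precisely by expanding $\mathrm{Exp}_{t,z}$ via the Adams operations $\psi_l$ (acting on both $t$ and $z$, whence the arguments $A_{\mathbf{n}}(t^l)$), indexing monomials by the multiplicity function $p(l,\mathbf{n})$, and comparing coefficients of $z^{\mathbf{d}}$ with the sign absorbed as you describe. Your bookkeeping of the leading $1$, the overall minus sign, and the finiteness of the set of admissible $p$ for fixed $\mathbf{d}$ is all accurate, so nothing is missing.
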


\vspace{.2in}

\noindent
\textbf{1.3.} \textit{Relation to Okounkov's conjecture, Maulik-Okounkov Lie algebras and the Davison-Meinhardt Lie algebra of BPS states.} The (rather straightforward) proof of Theorem~\ref{T:main} relies on the fact, proved by Sevenhant and Van den Bergh (\cite{SVdB}), that $\mathbf{H}_{Q/\k}$ is isomorphic to a suitable specialization of the positive half $\mathbf{U}_{\mathbf{v}}^+(\widetilde{\mathfrak{g}}_{Q/\k})$ of the quantized envelopping algebra of a huge \textit{generalized} or \textit{Borcherds} \textit{Kac-Moody algebra}
$\widetilde{\mathfrak{g}}_{Q/\k}$ which naturally contains $\mathfrak{g}_{Q}$ and which --contrary to $\mathfrak{g}_Q$-- \textit{does} depend on $\k$. By construction, the subspaces $\mathbf{H}^{cusp}_{Q/\k}[\mathbf{d}]$ correspond precisely to the \textit{simple} root space in $\widetilde{\mathfrak{g}}_Q[\mathbf{d}]$, see Section~\textbf{3.3}. In other words, the dimensions of the spaces of cuspidals in $\mathbf{H}_{Q/\k}$ correspond to the \textit{multiplicities} of the simple roots in $\widetilde{\mathfrak{g}}_{Q/\k}$, i.e. to the (usually infinite) Cartan matrix $\widetilde{\mathbf{A}}_{Q/\k}$ of $\widetilde{\mathfrak{g}}_Q$. \textit{Assuming} Conjecture~\ref{C:main}, one may define a \textit{$\mathbb{N}$-graded} (usually infinite) Cartan matrix $\widetilde{\mathbf{A}}_Q$ using the polynomials $C^{abs}_{Q,\mathbf{d}}$. Let $\widetilde{\mathfrak{g}}_Q$ be its associated $\mathbb{Z}$-graded Borcherds Kac-Moody algebra.

\vspace{.1in}

Maulik and Okounkov constructed in \cite{MO} a $\mathbb{Z}$-graded Borcherds Lie algebra $\mathbf{g}_Q$ using the geometry of stable envelopes in Nakajima quiver varieties. Okounkov conjectured that $\mathbf{g}_Q$ has a character precisely given (up to a global grading shift) by $\sum_{\d} A_{Q,\d}(t)z^\d$. The following conjecture is essentially a restatement of Okounkov's~:

\vspace{.1in}

\begin{conjecture}\label{C:main3} The $\mathbb{Z}$-graded Lie algebras $\mathbf{g}_Q$ and $\widetilde{\mathfrak{g}}_Q$ are isomorphic.
\end{conjecture}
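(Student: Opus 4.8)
The plan is to realize both $\mathbf{g}_Q$ and $\widetilde{\mathfrak{g}}_Q$ as Borcherds-Kac-Moody (BKM) Lie algebras whose $\mathbb{Z}$-grading records the variable $t$ (a cohomological degree on the geometric side, the Frobenius weight on the arithmetic side), and then to prove that they share the same graded Cartan datum. Once this is achieved the isomorphism follows from the rigidity of BKM algebras: such an algebra is generated by its simple root spaces subject only to the Serre-Borcherds relations dictated by the symmetrized bilinear form, so two BKM algebras with the same invariant form, the same set of simple roots, and the same graded multiplicities of simple roots in each degree are canonically isomorphic as graded Lie algebras. The problem thus reduces to matching two pieces of data: the bilinear form, and the graded multiplicities of the simple roots.

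For $\widetilde{\mathfrak{g}}_Q$ both are built in by construction. The symmetrized form is the Euler form $(\,,\,)$ of $Q$, and the simple root attached to degree $\mathbf{d}$ has graded multiplicity given by the coefficients of $C^{abs}_{Q,\mathbf{d}}(t)$, where we use Conjecture~\ref{C:main} to guarantee that these are genuine nonnegative integer multiplicities. The graded analog of the Borcherds character formula invoked in Section~\textbf{4.2.} then shows that the full graded character of $\widetilde{\mathfrak{g}}_Q$, recording all root spaces and not only the simple ones, equals $\sum_{\mathbf{d}} A_{Q,\mathbf{d}}(t)z^{\mathbf{d}}$; indeed this is exactly the plethystic relation used to define $C^{abs}$ from the $A$-polynomials. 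On this side, therefore, the character is $\sum_{\mathbf{d}} A_{Q,\mathbf{d}}(t) z^{\mathbf{d}}$ and the simple root multiplicities are the $C^{abs}_{Q,\mathbf{d}}$, the two being interchangeable through the invertible plethystic $\textup{Log}$/$\textup{Exp}$ procedure.

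The real work concentrates on the Maulik-Okounkov side. First I would establish that $\mathbf{g}_Q$ is a genuine $\mathbb{Z}$-graded BKM algebra carrying a symmetric invariant form restricting to the Euler form on $\mathbb{Z}^I$; this should follow from the stable-envelope construction of \cite{MO}, which already equips $\mathbf{g}_Q$ with a triangular decomposition and a root grading. Next I would identify its graded character with $\sum_{\mathbf{d}} A_{Q,\mathbf{d}}(t) z^{\mathbf{d}}$, which is Okounkov's conjecture and of which Conjecture~\ref{C:main3} is the Lie-theoretic repackaging. The most promising route to this character identity, and the cleanest bridge back to the Hall-algebraic picture underlying $\widetilde{\mathfrak{g}}_Q$, runs through the Davison-Meinhardt BPS Lie algebra $\mathfrak{g}^{BPS}_Q$: one shows $\mathbf{g}_Q \cong \mathfrak{g}^{BPS}_Q$ (an isomorphism expected from cohomological integrality and the comparison of stable envelopes with the cohomological Hall algebra), and then uses that the graded dimensions of $\mathfrak{g}^{BPS}_Q$ are governed by the BPS invariants of $Q$, which for a quiver without potential are precisely the $A$-polynomials by Kac's theorem \cite{Kac} and its cohomological refinement. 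Inverting the graded Borcherds character formula then yields simple root multiplicities $C^{abs}_{Q,\mathbf{d}}$ on the Maulik-Okounkov side as well, matching $\widetilde{\mathfrak{g}}_Q$ and completing the identification of Cartan data.

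The main obstacle is clearly the character identification on the geometric side: proving Okounkov's conjecture that $\mathbf{g}_Q$ has character $\sum_{\mathbf{d}} A_{Q,\mathbf{d}}(t) z^{\mathbf{d}}$ is a deep problem, known only in restricted cases. A subsidiary but nontrivial difficulty is to verify that $\mathbf{g}_Q$ is a BKM algebra in the strict sense, freely generated by its simple root spaces modulo Serre-Borcherds relations, with an invariant form compatible with $(\,,\,)$ and no hidden relations, since only then does the rigidity argument apply. The comparison with $\mathfrak{g}^{BPS}_Q$ is the device I would use to control this structure, but establishing that comparison for an arbitrary quiver is itself a substantial undertaking.
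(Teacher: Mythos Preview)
This statement is a \emph{conjecture} in the paper, not a theorem; the paper offers no proof, only the remark that it is essentially a restatement of Okounkov's conjecture and that it would follow if the Davison--Meinhardt BPS Lie algebra $\mathfrak{g}^{BPS}_Q$ were shown to be the positive half of a Borcherds algebra. There is therefore no ``paper's own proof'' to compare your proposal against.

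Your outline is a sensible research program and you are candid about its gaps, but it is not a proof. The rigidity step is fine: two $\mathbb{Z}$-graded Borcherds algebras with the same invariant form and the same graded simple-root multiplicities are indeed isomorphic. However, everything hinges on two inputs you do not establish: (a) that $\mathbf{g}_Q$ is a Borcherds algebra in the strict sense (generated by simple root spaces modulo only the Serre--Borcherds relations, with invariant form pulled back from $(\,,\,)$), and (b) that its graded character equals $\sum_{\mathbf{d}} A_{Q,\mathbf{d}}(t) z^{\mathbf{d}}$. Point (b) is precisely Okounkov's conjecture, which is open in general, and point (a) is not proved in \cite{MO} either. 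Routing through $\mathfrak{g}^{BPS}_Q$ does not dissolve the difficulty: the paper itself notes that what is missing on that side is exactly the Borcherds property, and the isomorphism $\mathbf{g}_Q \cong \mathfrak{g}^{BPS}_Q$ you invoke is another open comparison. In short, your proposal correctly reduces Conjecture~\ref{C:main3} to known open problems, which matches the paper's discussion, but it does not advance beyond that reduction.
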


\vspace{.1in}

This strongly suggests that the conjectural varieties $\mathcal{X}_{Q,\mathbf{d}}$ should be extracted from the geometry of Nakajima quiver varieties as well, but we have been unable to formulate any precise guess.

\vspace{.1in}

In a similar direction, Davison and Meinhardt defined another $\mathbb{Z}$-graded Lie algebra $\mathfrak{g}^{BPS}_{Q}$ using the theory
of (relative) cohomological Hall algebras, and showed that its graded character is (again up to a global shift in grading) given by
$\sum_{\mathbf{d}} A_{Q,\mathbf{d}}(t)z^{\mathbf{d}}$ (see \cite{DM}). Conjecture~\ref{C:main} would follow if one could prove that $\mathfrak{g}^{BPS}_{Q}$ were the positive half of a Borcherds algebra.

\vspace{.2in}

\noindent
\textbf{1.4.} \textit{Nilpotent analogues.} Assume that our quiver $Q=(I,\Omega)$ contains loops or cycles. Then the category of representations of $Q$ contains (at least) two natural Serre subcategories, that is the categories of \textit{nilpotent} and $1$-\textit{nilpotent} representations (see \cite{BSV}). We may consider the Hall algebra of the category of all nilpotent (resp. $1$-nilpotent) $Q$-modules, and the problem of counting cuspidal functions there. All the results of this paper hold, with similar proofs (and with the appropriate substitution of Kac polynomials for nilpotent Kac polynomials). Note, however, that the Maulik-Okounkov Lie algebra has not been defined in the nilpotent context (but see \cite{SV} for a the closely related cohomological Hall algebra which can be defined in the nilpotent context).

\vspace{.2in}

\noindent
\textbf{1.5.} \textit{Plethystic notation.} Throughout this paper we will use the standard plethystic notations, which we briefly recall here : let $t_1, t_2, \ldots$ be a fixed set of indeterminates, set 
$\mathbb{L}=\mathbb{Q}[[t_1, t_2, \ldots]]$ and let $\mathbb{L}^+ \subset \mathbb{L}$ be the augmentation ideal of $\mathbb{L}$. Consider the $\mathbb{Q}$-linear algebra homomorphisms 
$$\psi_l : \mathbb{L} \to \mathbb{L}, \qquad t_i \mapsto t_i^l,$$
and define inverse $\mathbb{Q}$-linear maps
$$\textup{Exp}: \mathbb{L} \mapsto \mathbb{L}^+, \qquad \textup{Log}: \mathbb{L}^+ \to \mathbb{L}$$
by the formulas
$$\textup{Exp}(f)=exp\left(\sum_{l \geq 1} \frac{1}{l}\psi_l(f)\right), \qquad \textup{Log}(f)=\sum_{l \geq 1} \frac{\mu(l)}{l} \psi_l(log(f)).$$
These satisfy the usual relations, i.e. $\textup{Exp}(f+g)=\textup{Exp}(f)\textup{Exp}(g)$, $\textup{Log}(fg)=\textup{Log}(f)+\textup{Log}(g)$, etc.
We will sometimes specifiy the variables $t_1, \ldots$ with respect to which we define the Adams operator by a subscript, as in
$\textup{Exp}_{z}$ or $\textup{Exp}_{t,z}$. Moreover, 
$$\textup{Exp}_{t_1, \ldots, t_s} \left( \sum_{n_1, \ldots, n_s} a_{n_1,\ldots, n_s} t_1^{n_1} \cdots t_s^{n_s}\right)=\prod_{n_1, \ldots n_s} \frac{1}{(1-t_1^{n_1} \cdots t_s^{n_s})^{a_{n_1, \ldots, n_s}}}.$$

\vspace{.2in}

{\centerline{\textbf{Acknowledgements}}}

\vspace{.1in}

We are grateful to B. Davison and  A. Okounkov for some stimulating discussion and correspondences, and to B. Deng and J. Xiao for explanations concerning their work \cite{DX}. The work of the first author started during his postdoctoral appointment at MIT, before being supported by the LABEX MILYON (ANR-10-LABX-0070) of Universit\'e de Lyon, within the program ``Investissements d'Avenir'' (ANR-11-IDEX-0007) operated by the French National Research Agency (ANR). The second author was partially supported by ANR (grant 13-BS01- 0001-01). 

\vspace{.2in}

\section{Borcherds-Kac-Moody algebras}

\vspace{.1in}

\noindent
\textbf{2.1.} \textit{Definition.} Let $Y$ be a free $\mathbb{Z}$-lattice equipped with a symmetric $\mathbb{Z}$-valued bilinear form $\{\,,\,\}: Y \otimes Y \to \mathbb{Z}$. Let $\Pi=\{\alpha_i, i \in I\}$ be an at most countable subset of $Y$ of $\mathbb{Q}$-linearly independent vectors, and assume that the following holds~:
$$\{\alpha_i,\alpha_j\} \leq 0 \quad \forall\; i \neq j, \qquad \{\alpha_i,\alpha_i\} \in \{2\} \cup \mathbb{Z}_{\leq 0}.$$
We will then say that $(Y,\{\;,\;\},\Pi)$ is a \textit{Borcherds datum}. We write $a_{ij}=\{\alpha_i,\alpha_j\}$ and set 
$$I^{hyp}=\{\alpha_i\;|\; a_{ii} < 0\}, \quad I^{iso}=\{\alpha_i\;|\; a_{ii} = 0\}, \quad I^{re}=\{\alpha_i\;|\; a_{ii} > 0\}.$$
Elements of $I^{hyp},I^{iso},I^{re}$ are called the \textit{hyperbolic}, resp. \textit{isotropic}, resp. \textit{real} simple roots. Elements of $I^{im}=I^{hyp} \cup I^{iso}$ are called \textit{imaginary} simple roots. We also set $A=(a_{ij})_{i,j}$ and call this the
\textit{Borcherds-Cartan matrix}. The Borcherds-Kac-Moody algebra associated to $A$ is the (generally infinite-dimensional) Lie algebra $\mathfrak{g}_A$ generated by elements $e_i,f_i,h_i, i \in I$ subject to the following set of relations
$$[h_i,h_j]=0 \qquad [h_i,e_j]=a_{ij}e_j, \qquad [h_i,f_j]=-a_{ij}f_j \qquad \forall\;i,j \in I,$$
$$[e_i,f_j]=h_i \delta_{ij}, \qquad ad^{1-a_{ij}}(e_i)e_j=ad^{1-a_{ij}}(f_i)f_j=0 \qquad \forall \;i \in I^{re}, j \in I,$$
$$[e_i,e_j]=[f_i,f_j]=0 \qquad \forall \;i,j \text{ such that }a_{ij}=0.$$

Set $\uQ=\bigoplus_{i \in I} \mathbb{Z}\alpha_i \subset Y$ and $\uQ^{\pm}=\bigoplus_i (\pm \mathbb{N})\alpha_i$.
The Lie algebra $\mathfrak{g}_A$ possesses a $\uQ$-grading 
$$\mathfrak{g}_A=\bigoplus_{\alpha} \mathfrak{g}_{\alpha}$$
with finite-dimensional graded pieces, such that $e_i \in \mathfrak{g}_{\alpha_i}, f_i \in \mathfrak{g}_{-\alpha_i}$. There is also a triangular decomposition $\mathfrak{g}_A = \mathfrak{n}_+ \oplus \mathfrak{h} \oplus \mathfrak{n}_-$, where $\mathfrak{n}_{\pm}=\bigoplus_{\alpha \in \uQ^{\pm}} \mathfrak{g}_\alpha$ and $\mathfrak{h}=\mathfrak{g}_0$. We set
$$\Delta=\{\alpha \in \uQ \backslash \{0\}\;|\; \mathfrak{g}_{\alpha} \neq \{0\}\}, \qquad \Delta^{\pm}=\Delta \cap (\pm \uQ^{\pm})$$
so that $\Delta= \Delta^+ \cup \Delta^-$. Finally the Weyl group of $\mathfrak{g}$ is the subgroup of $W\subset GL(Y)$ generated by the simple reflections $s_i : x \mapsto x- \{x,\alpha_i\} \alpha_i$, for $i \in I^{re}$. It preserves the root lattice $\uQ$ as well as the root system $\Delta$, and is in fact isomorphic, as an abstract group, to the Weyl group of the (usual) Kac-Moody algebra corresponding to the submatrix of $A$ indexed by $I^{re}\times I^{re}$. The sign representation $\epsilon : W \to \{\pm 1\}$ is the unique character of $W$ satisfying
$\epsilon(s_i)=-1$ for all $i$.

\vspace{.1in}

\noindent
\textit{Remark.} The above definition really only covers a special case of Borcherds algebras, but it will be sufficient for our purposes (see \cite{Borcherds} for details and for the general case). 

\vspace{.2in}

\noindent
\textbf{2.2.} \textit{The Borcherds denominator formula.} The enveloping algebra $U(\mathfrak{n}_+)$ inherits a $\uQ^+$-grading
from $\mathfrak{n}$. Its graded pieces are also finite dimensional, and we may define its character as the formal sum
$${ch}(U(\mathfrak{n}_+))=\sum_{\alpha} \text{dim}( U(\mathfrak{n}_+)_{\alpha}) z^{\alpha}.$$
The Borcherds character formula is an explicit expression for $ch(U(\mathfrak{n}_+))$ in terms of the Weyl group $W$ and an auxiliary term taking into account the imaginary simple roots. Enlarging $Y$ if necessary, we may fix an element $\rho \in Y$ satisfying
$\{\rho, \alpha_i\}=\frac{1}{2}a_{ii}$ for all $i \in I$. Let $\sigma$ be the collection of (unordered) tuples $\{\gamma_1, \ldots, \gamma_s\}$
of distinct, pairwise orthogonal, imaginary simple roots $\gamma _k \in I^{im}$. For $\underline{\gamma} \in \sigma$ we write
$$|\underline{\gamma}|=\sum_i \gamma_i, \qquad \epsilon(\underline{\gamma})=(-1)^{s}.$$
The Borcherds character formula now reads (see \cite{Borcherds})
\begin{equation}\label{E:BCF}
ch(U(\mathfrak{n}_+))=\left( \sum_{w \in W} \epsilon(w)z^{\rho-w\rho}w(S)\right)^{-1}
\end{equation}
where
$$S=\sum_{\underline{\gamma} \in \sigma} \epsilon(\underline{\gamma})z^{|\underline{\gamma}|}.$$

\vspace{.2in}

In \cite{KangBorcherds}, Kang defined a quantum version $U_q(\mathfrak{g}_A)$ of $U(\mathfrak{g}_A)$, defined over the field $\mathbb{C}(q)$, and proved that it is a flat deformation of $U(\mathfrak{g}_A)$. In \cite{SVdB}, Sevenhant and Van den Bergh considered the case of a complex (non root of unity) deformation parameter $q$, and proved similar results. In particular, the same Borcherds character formula (\ref{E:BCF}) holds for $U_q(\mathfrak{n}_+)$ for any $q \in \mathbb{C}^*$ which is not a root of unity.

\vspace{.2in}

\noindent
\textbf{2.3.} \textit{The Borcherds denominator formula in the presence of grading and charge.} It will be important for us to consider some slight generalizations of the above setting. 

\vspace{.1in}

Assume first that a \textit{charge} function $\mathbf{m}~:I \to \mathbb{N}, i \mapsto m_{\alpha_i}$ is given, describing the multiplicity of the simple root space $\mathfrak{g}_{\alpha_i}$. We always assume that $m_{\alpha_i}=1$ of $i \in I^{re}$, and also call the datum $(Y,\{\;,\;\},\Pi,\mathbf{m})$ a Borcherds datum. The definition of the associated Borcherds algebra $\mathfrak{g}_{A,\mathbf{m}}$ is the same as before except that we replace the generators $e_{\alpha_i}, f_{\alpha_i}$ by collections of generators $e_{\alpha_i}^{(l)}, f_{\alpha_i}^{(l)}$ for $l=1, \ldots, m_{\alpha_i}$, satisfying the same relations. The Weyl group $W$ remains unchanged, while the Borcherds character formula may now be written as follows~:
\begin{equation}\label{E:BCF2}
ch(U(\mathfrak{n}_{\mathbf{m},+}))=\left( \sum_{w \in W} \epsilon(w)z^{\rho-w\rho}w(S_{\mathbf{m}})\right)^{-1}
\end{equation}
where
$$S_\mathbf{m}=\sum_{\underline{\gamma} \in \sigma}\epsilon(\underline{\gamma})\prod_i S_{\mathbf{m},\gamma_i}$$
and
$$S_{\mathbf{m},\gamma}=m_{\gamma}z^\gamma$$
if $\gamma \in I^{hyp}$ while
$$S_{\mathbf{m},\gamma}=\sum_{l \geq 1} (-1)^l\binom{m_{\gamma}}{l}z^{l\gamma}=(1-z^{\gamma})^{m_{\gamma}}-1$$
if $\gamma \in I^{iso}$. 

\vspace{.1in}

Next, let us assume in addition that each simple root space is $\mathbb{N}$-graded, so that we have a \textit{graded charge} function
$\mathbf{m}^{\mathbb{N}}~: I \to \mathbb{N}[t]$, $\gamma \mapsto \sum_j m_{j,\gamma}t^j$. Then the associated Borcherds algebra $\mathfrak{g}^{\mathbb{N}}_{A,\mathbf{m}}$ is itself $\mathbb{N}$-graded, and the Borcherds denominator formula becomes
\begin{equation}\label{E:BCF3}
ch(U(\mathfrak{n}^{\mathbb{N}}_{\mathbf{m},+}))=\left( \sum_{w \in W} \epsilon(w)z^{\rho-w\rho}w(S^{\mathbb{N}}_{\mathbf{m}})\right)^{-1}
\end{equation}
where
$$S^{\mathbb{N}}_\mathbf{m}=\sum_{\underline{\gamma} \in \sigma}\epsilon(\underline{\gamma})\prod_i S^{\mathbb{N}}_{\mathbf{m},\gamma_i}$$
and
$$S^{\mathbb{N}}_{\mathbf{m},\gamma}=m_{\gamma}(t)z^\gamma$$
if $\gamma \in I^{hyp}$ while
$$S^{\mathbb{N}}_{\mathbf{m},\gamma}=\sum_{l_1, l_2, \ldots} (-1)^{\sum_j l_j}\prod_j\binom{m_{j,\gamma}}{l_j}t^{\sum j l_j}z^{\sum_j l_j\gamma}=\prod_j (1-t^jz^{\gamma})^{m_{j,\gamma}}-1$$
if $\gamma \in I^{iso}$. 

\vspace{.1in}

Observe that the above character formulas formally make sense even for a charge function $\mathbf{m}~: I \to \mathbb{Q}[t]$ -- we simply develop everything in power series in $z$.

\vspace{.2in}

\section{Generalities on Hall algebras and the Sevenhant-Van den Bergh theorem}

\vspace{.1in}

\noindent
\textbf{3.1.} \textit{Hall algebra.} Throughout we fix an arbitrary locally finite quiver $Q=(I,\Omega)$. For any finite field $\k$ we write $\k Q$ for the path algebra of $Q$ over $\k$, and we denote by $Rep_\k Q$ the category of finite-dimensional representations of $\k Q$. Recall that 
$$\langle\;,\;\rangle~: \mathbb{Z}^I \times \mathbb{Z}^I \to \mathbb{Z}, \qquad \langle dim(M),dim(N)\rangle = hom(M,N)-ext^1(M,N)$$
 is the Euler form. We denote by
 $$(\;,\;)~:  \mathbb{Z}^I \times \mathbb{Z}^I \to \mathbb{Z}, \qquad (\mathbf{d},\mathbf{n})=\langle \mathbf{d},\mathbf{n}\rangle + \langle \mathbf{n},\mathbf{d}\rangle$$
 the symmetrized Euler form, and by
 $$A_Q=(a_{ij})_{i,j\in I}, \qquad a_{ij}=(\epsilon_i, \epsilon_j)$$
 the associated Cartan matrix. Observe that by construction $A_Q$ is a Borcherds-Cartan matrix (with even entries on the diagonal), which does not depend on the choice of the finite field $\k$.
  
 \vspace{.1in}
 
 Let us denote by $\mathcal{M}_{\k}$ the set of isomorphism classes of objects of $Rep_{\k}Q$, and split it according to the dimension vector $$\mathcal{M}_{\k}=\bigsqcup_{\mathbf{d} \in \mathbb{N}^I} \mathcal{M}_{\k}[\mathbf{d}].$$
The Hall algebra of $Q$ over $\k$ is by definition the $\mathbb{N}^I$-graded $\C$-vector space
$$\mathbf{H}_{Q/\k}=\bigoplus_{\mathbf{d}} \mathbf{H}_{Q/\k}[\mathbf{d}],\qquad \mathbf{H}_{Q/\k}[\mathbf{d}]=\{ f : \mathcal{M}_{\k}[\mathbf{d}] \to \C\}$$
equipped with the multiplication
$$f \star g~: M \mapsto \sum_{N \subseteq M} f(M/N) g(N) v^{\langle M/N,N\rangle}$$
and the comultiplication
$$\Delta(f)(M,N)= \frac{v^{-\langle M,N\rangle}}{|Ext^1(M,N)|}\sum_{\xi \in Ext^1(M,N)}f(X_{\xi})$$
where $X_{\xi}$ denotes the extension of $M$ by $N$ corresponding to $\xi$. Here, we have set $v=|\k|^{\frac{1}{2}}$.
With the above definitions, $\mathbf{H}_{Q/\k}$ is a twisted bialgebra. Namely, if we equip the vector space $\H_{Q/\k} \otimes \H_{Q/\k}$ with the
new multiplication
$$(u \otimes w) \star (u' \otimes w')=v^{(deg(w),deg(u'))} (u \star u') \otimes (w \star w')$$
then $\Delta:\H_{Q/\k} \to \H_{Q/\k} \otimes \H_{Q/\k}$ becomes a morphism of algebras.
%en order to turn $\mathbf{H}_{Q/\k}$ into a true bialgebra (and in fact into a Hopf algebra), we need to extend it by adding a 'Cartan' part. Let
%$\mathbf{K}=\C[K_i^{\pm 1}]_{i \in I}$ be the (free, commutative) Laurent polynomial algebra in generators $K_i^{\pm 1}, i \in I$. The \textit{extended} Hall algebra of $Q/\k$ is
%$$\widetilde{\mathbf{H}}_{Q/\k}=\mathbf{H}_{Q/\k} \otimes \mathbf{K},$$
%with commutation relations
%$$K_i f = v^{(\epsilon_i,\mathbf{d})}f K_i, \qquad \forall\; i \in I,\;\forall\; f \in \mathbf{H}_{Q/\k}[\mathbf{d}].$$
%We will write $K_{\mathbf{d}}=\prod_i K_i^{d_i}$ for $d \in \mathbb{Z}^I$.
%The algebra $\widetilde{\mathbf{H}}_{Q/\k}$ carries a compatible coproduct $\Delta$ defined as follows~:
%$$\Delta=\bigoplus_{\mathbf{d},\mathbf{n}} \Delta_{\mathbf{d},\mathbf{n}}, \qquad \Delta_{\mathbf{d},\mathbf{n}}:~ \widetilde{\mathbf{H}}_{Q/\k}[\mathbf{d}+\mathbf{n}] \to \widetilde{\mathbf{H}}_{Q/\k}[\mathbf{d}]\otimes \widetilde{\mathbf{H}}_{Q/\k}[\mathbf{n}]$$
%$$ \Delta(K_i)=K_i \otimes K_i,\qquad
% \Delta_{\mathbf{d},\mathbf{l}}(f)=\Delta'(f)\cdot  K_{\mathbf{n}} \otimes 1.$$
We define a Hermitian bilinear form $(\;,\;):{\mathbf{H}}_{Q/\k} \otimes {\mathbf{H}}_{Q/\k} \to \mathbb{C}$ by
$$(f,g)=\sum_M \frac{f(M)\overline{g(N)}}{|Aut(M)|}.$$
It is nondegenerate and has the Hopf property, i.e.
$$(a\star b,c)=(a \otimes b,  \Delta(c)), \qquad \forall\; a,b,c \in {\mathbf{H}}_{Q/\k}.$$

\vspace{.1in}

Let $I_{Q,\d}, A_{Q,\d} \in \mathbb{Q}[t]$ be the Kac polynomials counting indecomposable, resp. absolutely indecomposable representations of $Q/\k$ of dimension $\d$, see \cite{Kac}.

\vspace{.1in}

\begin{lemma}\label{L:Kacexp} We have
\begin{equation}\label{E:Kacexp}
\sum_{\mathbf{d} \geq 0} \dim(\H_{Q/\k}[\d])z^{\d}=\textup{Exp}_z\left( \sum_{\d \geq 0} I_{Q,\d}(|\k|)z^{\d}\right)=\textup{Exp}_{t,z}\left( \sum_{\d \geq 0} A_{Q,\d}(t)z^{\d}\right)_{|t=|\k|}.
\end{equation}
In particular, $\dim(\H_{Q/\k}[\d])$ is equal to the evaluation at $t=|\k|$ of some polynomial in $\mathbb{N}[t]$ for any $\d$.
\end{lemma}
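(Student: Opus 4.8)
The plan is to reduce the statement to the Krull--Schmidt theorem together with Hua's identity relating the two families of Kac polynomials. The starting point is that, by the very definition of $\H_{Q/\k}[\d]$ as the space of $\C$-valued functions on the (finite, since $Q$ is locally finite) set $\mathcal{M}_\k[\d]$, we have $\dim(\H_{Q/\k}[\d]) = |\mathcal{M}_\k[\d]|$; thus the dimension simply counts isomorphism classes of $\k Q$-modules of dimension $\d$.

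First I would invoke Krull--Schmidt: every object of $Rep_\k Q$ decomposes, uniquely up to reordering, as a direct sum of indecomposables. An isomorphism class of dimension $\d$ is therefore the same datum as a finite multiset of indecomposable classes whose dimension vectors sum to $\d$. Writing $i_\d$ for the number of indecomposable classes of dimension $\d$, the usual multiset generating-function argument (each indecomposable of dimension $\mathbf{n}$ may be taken with any multiplicity $\geq 0$, contributing a factor $(1-z^{\mathbf{n}})^{-1}$) yields
$$\sum_{\d \geq 0} |\mathcal{M}_\k[\d]|\, z^\d = \prod_{\mathbf{n}} \frac{1}{(1-z^{\mathbf{n}})^{i_{\mathbf{n}}}}.$$
By definition of the Kac polynomial, $i_{\mathbf{n}} = I_{Q,\mathbf{n}}(|\k|)$, and by the product formula for $\textup{Exp}$ recalled in Section \textbf{1.5.}, the right-hand side equals $\textup{Exp}_z\big(\sum_\d I_{Q,\d}(|\k|)\, z^\d\big)$. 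This establishes the first equality.

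The second equality is then immediate upon evaluating at $t = |\k|$ the identity $\textup{Exp}_{t,z}\big(\sum_\d A_{Q,\d}(t)\, z^\d\big) = \textup{Exp}_z\big(\sum_\d I_{Q,\d}(t)\, z^\d\big)$ quoted in the introduction. For the final assertion, I would let $P_\d(t)$ denote the coefficient of $z^\d$ in $\textup{Exp}_{t,z}\big(\sum_\d A_{Q,\d}(t)\, z^\d\big)$, so that $\dim(\H_{Q/\k}[\d]) = P_\d(|\k|)$. Since $A_{Q,\d}(t) \in \mathbb{N}[t]$ by \cite{HLRV}, the product formula writes $\textup{Exp}_{t,z}$ of this series as a product of factors $(1 - t^j z^{\mathbf{n}})^{-a}$ with $a \geq 0$, each having nonnegative integral coefficients; hence $P_\d(t) \in \mathbb{N}[t]$.

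No serious obstacle arises: the argument is entirely formal once Krull--Schmidt is available. The only points demanding care are the combinatorial bookkeeping matching the multiset product to the plethystic formula, and the positivity claim --- the latter depending on the genuinely nontrivial input $A_{Q,\d}(t) \in \mathbb{N}[t]$ from \cite{HLRV}, after which preservation of positivity under $\textup{Exp}_{t,z}$ is transparent.
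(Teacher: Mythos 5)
Your proof is correct and follows essentially the same route as the paper's: the Krull--Schmidt decomposition gives the product formula $\prod_{\mathbf{n}}(1-z^{\mathbf{n}})^{-I_{Q,\mathbf{n}}(|\k|)}$, the second equality is Hua's classical identity, and the final positivity claim rests on $A_{Q,\mathbf{n}}(t)\in\mathbb{N}[t]$ from \cite{HLRV}. You merely spell out the multiset bookkeeping and the preservation of positivity under $\textup{Exp}_{t,z}$ more explicitly than the paper does, which is fine.
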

\begin{proof}
Since any $\k Q$-representation decomposes in a unique fashion as a direct sum of indecomposables, the l.h.s. of (\ref{E:Kacexp})
is equal to $\prod_{\d} (1-z^{\d})^{-I_d(q)}$, which is precisely equal to the m.h.s. The second equality in (\ref{E:Kacexp}) is classical, see e.g. \cite[Theorem 4.1]{Hua}. The last statement follows from the fact, proved in \cite{HLRV}, that $A_{Q,\mathbf{n}}(t) \in \mathbb{N}[t]$ for any $\mathbf{n}$.
\end{proof}

%\vspace{.1in}

%From now on we will drop the index $Q/\k$ whenever $Q$ and the field 
%$\k$ are understood from the context.

\vspace{.2in}

\noindent
\textbf{3.2.} \textit{Cuspidal functions.} By definition, the subspace of cuspidal functions in dimension $\mathbf{d}$ is
$$\mathbf{H}_{Q/\k}^{cusp}[\mathbf{d}]=\{x \in \mathbf{H}_{Q/\k}[\mathbf{d}]\;|\; \Delta'(x)=x\otimes 1 + 1 \otimes x\}.$$
For $\mathbf{d},\mathbf{n} \in \mathbb{N}^I$ we write $\mathbf{n} \leq \mathbf{d}$ if $n_i \leq d_i$ for all $i \in I$.
Denote by $\mathbf{G}(\mathbf{d}) \subset \mathbf{H}_{Q/\k}$ the subalgebra
generated by $\bigoplus_{\mathbf{n} < \mathbf{d}}\mathbf{H}_{Q/\k}[\mathbf{n}]$. From the nondegeneracy and the Hopf
property of $(\,,\,)$ we have $\mathbf{H}_{Q/\k}^{cusp}[\mathbf{d}]=\mathbf{G}(\mathbf{d})^\perp \cap \mathbf{H}_{Q/\k}[\mathbf{d}].$ It follows
also that $\mathbf{H}^{cusp}_{Q/\k}[\mathbf{d}]$ is a minimal space of new generators of $\mathbf{H}_{Q/\k}$ in dimension $\mathbf{d}$.

\vspace{.2in}

\noindent
\textbf{3.3.} \textit{The Sevenhant-Van den Bergh theorem.} Let $Q=(I,\Omega)$ be any quiver. Set 
$$\mathcal{C}_{Q/\k}=\{\mathbf{d}\;|\; \dim(\H_{Q/\k}^{cusp}[\mathbf{d}]) \neq 0\} \subset \mathbb{N}^I,$$ 
$$Y=\mathbb{Z}^{\mathcal{C}_{Q/\k}}$$
and let $\{\tilde{\epsilon}_\d\}_{\d \in \mathcal{C}_{Q/\k}}$ stand for the canonical basis of $Y$. We equip $Y$ with a symmetric bilinear form $\{\,,\,\}~: Y \otimes Y \to \mathbb{Z}$ by setting 
$$\{\tilde{\epsilon}_\d, \tilde{\epsilon}_\mathbf{n}\}=(\mathbf{d}, \mathbf{n}).$$
We also set $\Pi=\{\tilde{\epsilon}_\d\;|\; \d \in \mathcal{C}_{Q/\k}\}$ and define a charge function 
$$\mathbf{m}~: \mathcal{C}_{Q/\k} \to \mathbb{N}, ~\mathbf{d} \mapsto \dim(\H^{cusp}_{Q/\k}[\mathbf{d}]).$$
 As shown by Sevenhant and Van den Bergh, $(Y,\{\;,\;\},\Pi,\mathbf{m})$ is a Borcherds datum.  Let $\widetilde{A}_{Q/\k}$ be the Borcherds-Cartan matrix associated to the quadruple $(Y,\{\,,\,\}, \Pi, \mathbf{m})$, and let $\widetilde{\mathfrak{g}}_{Q/\k}, \widetilde{\mathfrak{n}}_{Q/\k}$ be its associated generalized Kac-Moody algebra, resp. its positive nilpotent subalgebra\footnote{in an effort to unburden the notation, we drop the subscript $+$ in $\mathfrak{n}_+$} as in \textbf{3.3.} Via the projection map $\pi: Y \to \mathbb{Z}^I, \epsilon_\d \mapsto \mathbf{d}$, we may consider $U_v(\widetilde{\mathfrak{n}}_{Q/\k})$ as a $\mathbb{N}^I$-graded algebra. 

\vspace{.1in}

\begin{theorem}[\cite{SVdB}]\label{T:SvdB} There is an isomorphism of $\mathbb{N}^I$-graded algebras $\H_{Q/\k} \simeq U_v(\widetilde{\mathfrak{n}}_{Q/\k})$. 
\end{theorem}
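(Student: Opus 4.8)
The plan is to exhibit the isomorphism as a concrete map out of the quantum Borcherds algebra that sends Chevalley generators to a chosen basis of cuspidals, and to prove bijectivity by playing the two nondegenerate Hopf pairings against each other: Green's pairing on the Hall side, and the Drinfeld-double pairing on the quantum-group side. Conceptually this is an instance of a structure theorem for self-dual graded Hopf algebras, whose conclusion is forced to be a Borcherds datum. First I would fix, for each $\mathbf{d} \in \mathcal{C}_{Q/\k}$, a basis $c_{\mathbf{d},1}, \dots, c_{\mathbf{d},m_\mathbf{d}}$ of $\H_{Q/\k}^{cusp}[\mathbf{d}]$, with $m_\mathbf{d}=\dim \H^{cusp}_{Q/\k}[\mathbf{d}]$, to serve as the images of the generators $e^{(l)}_{\tilde\epsilon_\mathbf{d}}$. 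Then I would check that $(Y,\{\,,\,\},\Pi,\mathbf{m})$ is genuinely a Borcherds datum: $\mathbb{Q}$-linear independence of $\Pi$ is built into $Y=\mathbb{Z}^{\mathcal{C}_{Q/\k}}$, so the real content is the sign condition $\{\tilde\epsilon_\mathbf{d},\tilde\epsilon_\mathbf{n}\}=(\mathbf{d},\mathbf{n})\leq 0$ for $\mathbf{d}\neq\mathbf{n}$ together with the diagonal condition $(\mathbf{d},\mathbf{d})\in\{2\}\cup\mathbb{Z}_{\leq 0}$ for $\mathbf{d}\in\mathcal{C}_{Q/\k}$. These do not hold for arbitrary $\mathbf{d}$ and must be deduced from the Hall structure: a cuspidal degree with $(\mathbf{d},\mathbf{d})>2$, or a pair with $(\mathbf{d},\mathbf{n})>0$, would force, through the Hopf property of Green's form, a nontrivial quantum-Serre-type relation among the putative simple roots, contradicting that the $c_{\mathbf{d},\bullet}$ are genuinely new minimal generators in their degree.

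Next I would define the algebra homomorphism $\Phi:U_v(\widetilde{\mathfrak{n}}_{Q/\k})\to\H_{Q/\k}$ on generators by $e^{(l)}_{\tilde\epsilon_\mathbf{d}}\mapsto c_{\mathbf{d},l}$, the point being to verify that the cuspidals satisfy the defining relations of $U_v(\widetilde{\mathfrak{n}}_{Q/\k})$: for a real simple root ($\mathbf{d}=\epsilon_i$ with no loops, $(\mathbf{d},\mathbf{d})=2$) the quantum Serre relation $\mathrm{ad}^{1-a_{\mathbf{d}\mathbf{n}}}(c_\mathbf{d})(c_\mathbf{n})=0$, and for orthogonal imaginary roots the $v$-commutation $[c_\mathbf{d},c_\mathbf{n}]_v=0$. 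Each such relation $R$ lives in a single graded piece, and by nondegeneracy of Green's pairing it suffices to show $(R,x)=0$ for all $x$; the Hopf property transfers the pairing against a product onto the iterated coproduct of $R$, where the primitivity of the $c_\mathbf{d}$ collapses the computation to a rank-one $\mathfrak{sl}_2$- and rank-two Heisenberg-type calculation, exactly as in the Ringel-Green proof. Since the cuspidals generate $\H_{Q/\k}$ (recalled in \textbf{3.2}), $\Phi$ is surjective.

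The main obstacle is injectivity, i.e. that the cuspidals satisfy no relations beyond those above. Here I would use that $U_v(\widetilde{\mathfrak{n}}_{Q/\k})$ carries its own nondegenerate Hopf pairing, arising from Kang's construction of the quantum Borcherds algebra and its Drinfeld double (nondegeneracy being available since $v=|\k|^{1/2}$ is not a root of unity), and normalize the basis $c_{\mathbf{d},\bullet}$ so that $\Phi$ intertwines this pairing with Green's form; it is enough to match the Gram matrices on the simple-root spaces, which are governed by $\{\,,\,\}$ and $\mathbf{m}$ by construction. A surjective algebra map intertwining the two pairings carries radical to radical, so $\ker\Phi$ lies in the radical of the source pairing, which is zero; hence $\Phi$ is injective. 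As a cross-check, both characters are then given by the same denominator formula (\ref{E:BCF2}) with charge $\mathbf{m}$, so $\Phi$ is a graded isomorphism. The delicate technical points to nail down — and precisely the heart of \cite{SVdB} — are the nondegeneracy of the pairing on $U_v(\widetilde{\mathfrak{n}}_{Q/\k})$ (equivalently, a PBW basis for the quantum Borcherds algebra) and the compatibility of the twisting cocycle on $\H_{Q/\k}\otimes\H_{Q/\k}$ with the one defining the double.
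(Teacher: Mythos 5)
First, a framing remark: the paper does not prove Theorem~\ref{T:SvdB} at all --- it is imported verbatim from \cite{SVdB} --- so the only meaningful comparison is with the cited proof of Sevenhant--Van den Bergh. Your outline does reproduce its architecture: their result is exactly the structure theorem for graded self-dual Hopf algebras you invoke (send generators to bases of the primitive spaces, get surjectivity from the fact that cuspidals generate, get injectivity by identifying Green's form with the canonical pairing on $U_v(\widetilde{\mathfrak{n}}_{Q/\k})$ and using that its radical vanishes at a non-root-of-unity parameter), and you correctly locate the hard technical inputs (nondegeneracy of the pairing on the quantum Borcherds algebra, compatibility of the twists). Indeed one can even shortcut Kang's theorem by \emph{defining} the relevant object as the free algebra on the cuspidal spaces modulo the radical of the induced Hopf pairing, which is essentially what \cite{SVdB} do.

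However, there is one genuine gap, at the step where you verify that $(Y,\{\,,\,\},\Pi,\mathbf{m})$ is a Borcherds datum. Your claimed mechanism --- that $(\d,\mathbf{n})>0$ or $(\d,\d)>2$ ``would force a nontrivial quantum-Serre-type relation among the putative simple roots, contradicting that the $c_{\d,\bullet}$ are genuinely new minimal generators'' --- is a non sequitur: relations among minimal generators contradict nothing (the quantum Serre relations themselves are such relations, and minimality of a generating set is insensitive to them), and nondegeneracy of Green's pairing, which is all you ever invoke, cannot produce sign conditions. The missing ingredient is the \emph{positive definiteness} of the Hermitian form $(f,g)=\sum_M f(M)\overline{g(M)}/|\mathrm{Aut}(M)|$. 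For primitive $x,y$ of distinct degrees $\alpha,\beta$, the Hopf property gives, for $z=xy-v^{(\alpha,\beta)}yx$,
\begin{equation*}
(z,z)=\bigl(1-v^{2(\alpha,\beta)}\bigr)(x,x)(y,y),
\end{equation*}
and since $v=|\k|^{1/2}>1$ and $(x,x),(y,y)>0$, positivity forces $(\alpha,\beta)\leq 0$. The diagonal condition then follows as in the paper's own proof of Proposition~\ref{P:firstprop}: each $\epsilon_i$ lies in $\mathcal{C}_{Q/\k}$, so for $\d\in\mathcal{C}_{Q/\k}\setminus\Pi_0$ one gets $(\d,\epsilon_i)\leq 0$ for all $i$, whence $(\d,\d)=\sum_i d_i(\d,\epsilon_i)\leq 0$. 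Note that the same norm computation repairs and streamlines your relation-checking step: when $(\alpha,\beta)=0$ it yields $(z,z)=0$, hence $z=0$ by positivity, which is precisely the commutation relation for orthogonal imaginary simple roots, and the analogous norm computation kills the quantum Serre elements for real $\epsilon_i$. So the proof goes through along your lines, but only after replacing the ``minimal generators'' argument by the positivity of Green's form, which is a hypothesis (not a consequence) of the self-duality structure theorem of \cite{SVdB}.
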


\vspace{.1in}

\noindent
\textit{Remark}. The above isomorphism is in fact an isomorphism of Hopf algebras but we won't need it.

\vspace{.2in}

\section{Counting cuspidal functions}

\vspace{.1in}

\noindent
\textbf{4.1.} \textit{Proof of Theorem~\ref{T:main}}. We will prove the polynomiality of $C_{Q,\d}(q)$ inductively by comparing the
Borcherds character formula (\ref{E:BCF}) and the dimension formula (\ref{E:Kacexp}) in terms of Kac I-polynomials. For $i \in I$ 
we have $C_{Q,\epsilon_i}=q^{g_i},$ where $g_i$ is the number of edge loops at the vertex $i$. Let us now fix some $\d \in \mathbb{N}^I$
and assume that there exist polynomials $C_{Q,\mathbf{n}} \in \mathbb{Q}[t]$ such that $\dim(\H^{cusp}_{Q/\k}[\mathbf{n}])=C_{Q,\mathbf{n}}(|\k|)$ for any $\k$ and any $\mathbf{n} < \mathbf{d}$. 

\vspace{.1in}

Let $A$ be the (usual, i.e. $I \times I$) Borcherds-Cartan matrix associated to $Q$, and let $W$ be its Weyl group acting on $\mathbb{Z}^I$. Let $\widetilde{A}_{Q/\k}^{< \d}$ be the submatrix of  $\widetilde{A}_{Q/\k}$ obtained by keeping only the rows and columns indexed by $\mathcal{C}_{<\mathbf{d}}=\{\mathbf{n} \in \mathcal{C}_{Q/\k} \;|\; \mathbf{n} <\d\}$, and let $\widetilde{\mathfrak{g}}^{<\d}_{Q/\k}\subset \widetilde{\mathfrak{g}}_{Q/\k}$ be the associated Borcherds subalgebra. We likewise denote by $Y_{Q/\k}^{<\mathbf{d}}, \widetilde{\mathfrak{n}}^{<\d}_{Q/\k},\ldots$ the corresponding root lattice, nilpotent subalgebra, etc. 
Recall that the Weyl group of a Borcherds-Kac-Moody algebra is generated by simple reflections $s_i$ corresponding to real roots
(hence $W^{<\d}=W$ for all $\d$), and that the pairing on $Y$ factors through the projection $\pi~:Y \to \mathbb{Z}^I$. It follows that the projection $\pi$ is $W$-equivariant.

\vspace{.1in}

By construction and by Theorem~\ref{T:SvdB} we have
\begin{equation}\label{E:proof1}
\dim(\H^{cusp}_{Q/\k}[\d])=\dim(\H_{Q/\k}[\d])-\dim (U(\widetilde{\mathfrak{n}}^{<\d}_{Q/\k})[\d]).
\end{equation}
By Lemma~\ref{L:Kacexp}, $dim(\H_{Q/\k}[\d])$ is the evaluation at $t=|\k|$ of some universal polynomial in $\mathbb{N}[t]$, hence we just have
to show that the same holds for $dim (U(\widetilde{\mathfrak{n}}^{<\d}_{Q/\k})[\d]).$ This will be a consequence of the Borcherds character formula (\ref{E:BCF}) for $ch(U(\widetilde{\mathfrak{n}}^{<\d}_{Q/\k}))$. Indeed, we claim that $dim (U(\widetilde{\mathfrak{n}}^{<\d}_{Q/\k})[\mathbf{n}])$ is a polynomial in $|\k|$ for all $\mathbf{n} \in \mathbb{N}^I$. To prove this, it is in turn enough to show that
$${X}_{Q/\k}^{<\d}=\sum_{w \in W} \epsilon(w)z^{\rho^{<\d}-w\rho^{<\d}}w({S}_{Q/\k}^{<\d})$$
is itself polynomial in $|\k|$, i.e. that for any $\beta$, the coefficient of $z^{\beta}$ is polynomial in $|\k|$ (indeed, it follows from the general theory that the support of $X_{Q/\k}^{<\d}$ is in $\mathbb{N}^I$), and thus it is enough to show that $S_{Q/\k}^{<\d}$ is polynomial in $|\k|$. Using the definition, it is easy to see that $S_{Q/\k}^{<\d}=\sum_{\underline{\beta}}X_{\underline{\beta}}$
where $\underline{\beta}$ runs among all tuples $\{\beta_1, \beta_2, \ldots\}$ of distinct imaginary roots in $\mathbb{N}^I$ which are mutually orthogonal, where we have set
$$X_{\underline{\beta}}= \prod_i X_{\beta_i}, $$ 
with $X_{\beta}=0$ unless $\beta < \d$ and otherwise
$$X_{\beta}=-C_{Q,\beta}(|\k|)z^{\beta}, \qquad \text{if}\; (\beta,\beta)<0,$$
$$X_{\beta}=(1-z^\beta)^{C_{Q,\beta}(|\k|)}-1, \qquad \text{if}\; (\beta,\beta)=0.$$
Since any element of $\mathbb{N}^I$ may be decomposed as a sum of elements of $\mathbb{N}^I$ in a finite number of ways, 
we deduce that $S_{Q/\k}^{<\d}$ is polynomial in $|\k|$ as wanted. It is clear from the above that $C_{Q,\d}(t) \in \mathbb{Q}[t]$. The induction step is complete. Note that the above counting argument only uses the symmetrized Euler form and the Kac polynomials $I_{Q,\d}(t)$, both of which are independent of the orientation of the quiver. Theorem~\ref{T:main} is proved.\qed

\vspace{.2in}

\noindent
\textbf{4.2.} \textit{Proof of Proposition~\ref{P:firstprop}.} The first statement follows from the fact that $(\mathbb{Z}^I, \{\;,\;\},\mathcal{C}_{Q/\k})$ is a Borcherds datum. Indeed, this implies that for any $\mathbf{d} \in \mathcal{C}_{Q/\k} \backslash \Pi_0$ we have $(\mathbf{d},\epsilon_i) \leq 0$ for any $i \in I$. But then $(\d,\d) \leq 0$. We now prove the second statement. We will use the following slight extension, whose proof is given in the appendix, of Kac's conjecture proved by Hausel (see \cite{HauselKacConj})~: let $\mathfrak{g}_{Q^{re}}$ be the Kac-Moody algebra associated to the subquiver $Q^{re}=(I^{re},\Omega^{re})$ of $Q$ consisting of the real vertices and the edges between them. Then for any $\d$, $A_{Q,\d}(0)=
\dim(\mathfrak{g}_{Q^{re}}[\d])$. In particular, $A_{Q,\d}(0)=0$ whenever $\d \not\in \mathbb{N}^{I^{re}}$.
This implies that
\begin{equation}\label{E:kacatt=0}
\begin{split}
\textup{Exp}_{t,z}\left( \sum_{\d} A_{Q,\d}(t)z^{\d}\right)_{|t=0}&=\textup{Exp}_z\left( \sum_{\d} A_{\d}(0) z^\d\right)\\&=\textup{Exp}_z\left( \sum_{\d} \dim(\mathfrak{g}_{Q^{re}}[\d]) z^\d\right)\\ &=ch(U(\mathfrak{n}_{Q^{re}})).
\end{split}
\end{equation}
We now argue by induction. Let us fix $\mathbf{d}$ and assume that $C_{Q,\mathbf{n}}(0)=0$ for any $\mathbf{n} \in \mathbb{N}^I \backslash \Pi_0$, $\mathbf{n} < \mathbf{d}$.
By combining (\ref{E:Kacexp}), (\ref{E:kacatt=0}) and (\ref{E:proof1}) we see that $C_{Q,\d}(0)=\dim(U(\mathfrak{n}_{Q^{re}})[d])-\dim(U(\widetilde{\mathfrak{n}}^{<\d}_{Q/\k}[d]))_{|t=0}$, where we view $\dim(U(\widetilde{\mathfrak{n}}^{<\d}_{Q/\k}[d]))$ as a polynomial in $t$ (see the proof of Theorem~\ref{T:main}). From the induction hypothesis, we have $(S^{<\d}_{Q/\k})_{|t=0}
=S_{Q^{re}}$ and thus $\dim(U(\widetilde{\mathfrak{n}}^{<\d}_{Q/\k}[d]))_{|t=0}=\dim(U(\mathfrak{n}_{Q^{re}})[\d])$. Hence $C_{Q,\d}(0)=0$ as wanted.
\qed

\vspace{.1in}

\noindent
\textit{Remark.} From the defining relations for $C^{abs}_{Q,\d}(t)$ in terms of $C_{Q,\d}(t)$ it is easy to deduce that $C^{abs}_{Q,\d}(0)=0$ for all $\d \not\in \Pi_0$.

\vspace{.2in}

\noindent
\textbf{4.3.} \textit{Counting absolutely cuspidal functions.} As shown by the examples given in the introduction, the polynomials $C_{Q,\d}(t)$ are in general neither integral, nor positive. We now replace, in the above construction, the Kac I-polynomials with the A-polynomials, and work in the setting of $\mathbb{N}$-\textit{graded} Borcherds algebras. In other words, we replace the character formulas
\begin{equation}\label{E:gradedproof0}
\pi\left( ch(U(\widetilde{\mathfrak{n}}_{Q/\k}))\right)=\textup{Exp}_{z}\left( \sum_{\d >0} I_{Q,\d}(t)z^{\d}\right)_{|t=|\k|} \qquad \forall\; \k
\end{equation}
by the graded analog
\begin{equation}\label{E:gradedproof1}
\pi\left( ch(U(\widetilde{\mathfrak{n}}^{\mathbb{N}}_{Q}))\right)=\textup{Exp}_{t,z}\left( \sum_{\d >0} A_{Q,\d}(t)z^{\d}\right).
\end{equation}
Note that the r.h.s of (\ref{E:gradedproof1}) is equal to the r.h.s of (\ref{E:gradedproof0}) before evaluation at $t=|\k|$ (see e.g. Lemma~\ref{L:Kacexp}). Of course, the \textit{existence} of the $\mathbb{N}$-graded Borcherds algebra $\widetilde{\mathfrak{g}}^{\mathbb{N}}_Q$ is still conjectural --it is the essence of Conjecture~\ref{C:main}--
but we may \textit{assume} that it is associated to a data $(Y,\{\,,\,\},\Pi,\mathbf{m}^{\mathbb{N}})$
where $Y=\mathbb{Z}^{\mathbb{N}^I}$, $\{\;,\;\}$ is induced by the Euler form $(\;,\;)$ on $\mathbb{Z}^I$, $\Pi=\{\tilde{\epsilon}_{\d}\;|\; \d \in \mathbb{N}^I\}$, and $\mathbf{m}^{\mathbb{N}}~:\Pi \to \mathbb{Q}[t]$ is a charge function, and \textit{then} determine the values of this charge function. Note that by the exact same argument as in \textbf{4.1.}, $\mathbf{m}^{\mathbb{N}}$ is indeed uniquely determined, i.e. there exists a unique family of rational polynomials $C^{abs}_{Q,\d} \in \mathbb{Q}[t]$ for which the (formal) character of the $\mathbb{N}$-graded Borcherds algebra associated to $(Y,\{\,,\,\},\Pi,\mathbf{m}^{\mathbb{N}})$ with $\mathbf{m}^{\mathbb{N}}(\tilde{\epsilon}_\mathbf{d})=C^{abs}_{Q,\mathbf{d}}(t)$ satisfies (\ref{E:gradedproof1}).

\vspace{.1in}

As it turns out, there is a rather simple relationship between the families of polynomials $C_{Q,\d}(t)$ and $C^{abs}_{Q,\d}$. More precisely,

\vspace{.1in}

\begin{proposition}\label{P:relcusp} The following hold~:
\begin{enumerate}
\item[i)] We have $C_{Q,\d}(t)=C^{abs}_{Q,\d}$ for any $\d \in \mathbb{N}^I$ satisfying $(\d,\d)<0$,
\item[ii)] For any $\mathbf{d} \in (\mathbb{N}^I)_{prim}$ such that $(\d,\d)=0$ we have
$$\textup{Exp}_z\left(\sum_{l \geq 1} C_{Q,l\mathbf{d}}(t)z^{l}\right)=\textup{Exp}_{t,z}\left(\sum_{l \geq 1} C^{abs}_{Q,l\mathbf{d}}(t)z^{l}\right).$$
\end{enumerate}
\end{proposition}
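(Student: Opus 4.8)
The plan is to exploit that the two presentations compute the \emph{same} generating series and differ only in how the isotropic simple roots are encoded, and then to isolate the contribution of each imaginary simple root. Write $\Phi=\textup{Exp}_{t,z}\big(\sum_{\d>0}A_{Q,\d}(t)z^{\d}\big)\in\mathbb{Q}[t][[z]]$. First I would build the non-graded denominator (\ref{E:BCF2}) from the \emph{polynomial} charges $C_{Q,\d}(t)$ (this is sanctioned by the remark following (\ref{E:BCF3})), obtaining a series $\Psi\in\mathbb{Q}[t][[z]]$ whose specialization at $t=|\k|$ is, by Theorem~\ref{T:SvdB} and Lemma~\ref{L:Kacexp}, the inverse of the honest character $\sum_{\d}\dim(\H_{Q/\k}[\d])z^{\d}=\Phi_{|t=|\k|}$. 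Since this holds for infinitely many $t=|\k|$ and both series lie in $\mathbb{Q}[t][[z]]$, one gets $\Psi^{-1}=\Phi$; the graded construction of \textbf{4.3} gives the same equality for the graded denominator $\Psi^{\mathbb N}$ built from the $C^{abs}_{Q,\d}$, so $\Psi=\Psi^{\mathbb N}=\Phi^{-1}$. Because the Weyl group $W$, the vector $\rho$ and the form all factor through $\pi$ and are \emph{identical} for the two Borcherds data, both denominators have the shape $\sum_{w\in W}\epsilon(w)z^{\bar\rho-w\bar\rho}w(S)$ with the same Weyl part; it therefore suffices to prove that the two correction factors coincide, $S=S^{\mathbb N}$. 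This last reduction is the uniqueness of the correction factor, i.e. the injectivity of $S\mapsto\sum_{w}\epsilon(w)z^{\bar\rho-w\bar\rho}w(S)$ on series supported in the imaginary cone (equivalently, the triangular recovery of the charges from $\Phi$ already underlying \textbf{4.1}).

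Next I would reorganize $S=\sum_{\underline\gamma}\epsilon(\underline\gamma)\prod_i S_{\mathbf{m},\gamma_i}$, a clique-sum over tuples of pairwise orthogonal imaginary simple roots. The structural observation I would rely on is that orthogonality to an isotropic line is unambiguous: since $(\beta,l\mathbf{p})=l(\beta,\mathbf{p})$, a vector $\beta$ is orthogonal to one nonzero multiple of a primitive isotropic $\mathbf{p}$ iff it is orthogonal to all of them. Hence any pairwise orthogonal tuple meeting the ray $\mathbb{Z}\mathbf{p}$ has all of its members in $\mathbf{p}^{\perp}$, so I may collect the isotropic simple roots $l\mathbf{p}$ of each ray into a single factor. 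This rewrites $S$ as a clique-sum $\sum_{K}\prod_{v\in K}(D_v-1)$ over the \emph{primitive orthogonality graph} $\mathcal{G}$ whose vertices are the hyperbolic imaginary roots $\d$ (those with $(\d,\d)<0$) and the primitive isotropic rays $\mathbf{p}$, edges recording orthogonality, where $D_v$ is the denominator factor of $v$: $D_{\d}=1-C_{Q,\d}(t)z^{\d}$ for a hyperbolic vertex and $D_{\mathbf{p}}=\prod_{l\ge1}(1-z^{l\mathbf{p}})^{C_{Q,l\mathbf{p}}(t)}$ for a ray. The graded correction factor $S^{\mathbb N}$ has the identical shape over the same $\mathcal{G}$, with $D^{\mathbb N}_{\d}=1-C^{abs}_{Q,\d}(t)z^{\d}$ and $D^{\mathbb N}_{\mathbf{p}}=\prod_{l,j}(1-t^{j}z^{l\mathbf{p}})^{m_{j,l\mathbf{p}}}$, where $C^{abs}_{Q,l\mathbf{p}}(t)=\sum_j m_{j,l\mathbf{p}}t^{j}$. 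Thus the equality $D_{\d}=D^{\mathbb N}_{\d}$ at a hyperbolic vertex is exactly statement (i), while $D_{\mathbf{p}}=D^{\mathbb N}_{\mathbf{p}}$ reads $\prod_{l}(1-z^{l\mathbf{p}})^{C_{Q,l\mathbf{p}}}=\prod_{l,j}(1-t^{j}z^{l\mathbf{p}})^{m_{j,l\mathbf{p}}}$, which is the reciprocal of the identity in (ii) under $z^{l}\mapsto z^{l\mathbf{p}}$.

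It then remains to show that a weighted clique-sum determines its vertex weights, which I would prove by induction on total degree $|\beta|$. Any clique with at least two vertices contributes to degree $\beta$ only through factors of strictly smaller $z$-degree, and these agree on the two sides by the induction hypothesis; hence the single-vertex contributions in degree $\beta$ must agree. Finally, for each $\beta$ at most one vertex of $\mathcal{G}$ carries $\beta$ in its degree-support — a hyperbolic vertex occupies its own hyperbolic degree, distinct primitive rays have disjoint isotropic supports, and an isotropic degree never equals a hyperbolic one — so the single-vertex equality pins down $D_v=D^{\mathbb N}_v$ at that vertex, closing the induction and establishing both (i) and (ii).

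The main difficulty will be the first reduction, i.e. passing from equality of the Weyl-symmetrized characters to equality of the bare correction factors: one must control simultaneously the cross terms coming from non-orthogonal roots and the $w\neq 1$ contributions in the Weyl sum, and it is precisely here that the well-definedness of orthogonality-to-a-ray and the triangularity of the degree filtration do the real work. The sign conventions in the isotropic local factors $S_{\mathbf{m},\gamma}$ must also be tracked with care, so that the ray factor comes out as $\prod_{l}(1-z^{l\mathbf{p}})^{C_{Q,l\mathbf{p}}}$ and not its sign-twisted variant.
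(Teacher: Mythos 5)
Your core combinatorics coincides with the paper's own proof of Proposition~\ref{P:relcusp}: the paper likewise passes to the quotient lattice in which $\tilde{\epsilon}_{l\mathbf{p}}$ is identified with $l\tilde{\epsilon}_{\mathbf{p}}$, splits the correction factor by \emph{primitive type} (your observation that $(\beta,l\mathbf{p})=l(\beta,\mathbf{p})$ makes orthogonality-to-a-ray well defined appears there verbatim, as the remark that the form is pulled back from $\mathbb{Z}^I$), and matches the hyperbolic factors with statement (i) and the collected isotropic ray factors
$\prod_{l}(1-z^{l\mathbf{p}})^{C_{Q,l\mathbf{p}}}=\textup{Exp}_z\left(-\sum_l C_{Q,l\mathbf{p}}z^{l\mathbf{p}}\right)$ against $\textup{Exp}_{t,z}\left(-\sum_l C^{abs}_{Q,l\mathbf{p}}z^{l\mathbf{p}}\right)$ — exactly your $D_{\mathbf{p}}=D^{\mathbb{N}}_{\mathbf{p}}$, which is the reciprocal form of (ii). Your sign-vigilance is warranted (the displayed isotropic local factor $S_{\mathbf{m},\gamma}$ in \textbf{2.3} carries a bookkeeping slip relative to the factors $Z_{\beta_i}$ actually used in the proof), and your insistence that a clique contains at most one multiple of a hyperbolic primitive (since $(l\beta,l'\beta)=ll'(\beta,\beta)<0$) is in fact more careful than the paper's displayed hyperbolic $Z_{\beta_i}$ — harmlessly so, since (i) matches those factors term by term in any case.

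The genuine difference is the \emph{direction} of the argument, and it is what creates your one real unpaid obligation. The paper takes the polynomials defined by (i)--(ii), verifies by the easy direction of your computation that $\pi'(S^{\mathbb{N}}_Q)_{|t=|\k|}=\pi'(S_{Q/\k})$ for every $\k$, hence that the associated graded character satisfies (\ref{E:gradedproof1}), and concludes by the uniqueness of the graded charge function already established in \textbf{4.3} via the recursion of \textbf{4.1}. You instead start from equality of the Weyl-symmetrized denominators and want to deduce equality of the bare correction factors, which requires the injectivity of $S\mapsto\sum_{w}\epsilon(w)z^{\rho-w\rho}w(S)$ on series supported on sums of pairwise orthogonal imaginary simple roots — precisely the step you flag as "the main difficulty" and do not prove. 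That lemma is true, and provable by a standard triangularity argument which you should supply: any such sum $\mu$ pairs nonpositively with every real simple root, so $-\mu$ is dominant and hence $w\mu\geq\mu$ coefficientwise, while $\rho-w\rho$ is a nonzero sum of positive (real) roots for $w\neq 1$; therefore $\rho-w\rho+w\mu>\mu$, and comparing coefficients at a minimal-height element of the support of $S-S^{\mathbb{N}}$ kills that element, so $S=S^{\mathbb{N}}$ by induction on height. (Your second lemma — that a weighted clique-sum over the primitive orthogonality graph determines its vertex factors — is correctly proved by your degree induction together with the disjointness of ray supports.) Alternatively, and more economically, you can flip the direction as the paper does: define the charges by (i)--(ii) and check the character identity, in which case both of your injectivity lemmas become unnecessary, since per-vertex equality of factors trivially implies equality of the clique-sums, and uniqueness of $C^{abs}_{Q,\mathbf{d}}$ is already on record.
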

\begin{proof} Let $\widetilde{\mathfrak{g}}^\mathbb{N}_{Q}$ be the $\mathbb{N}$-graded Borcherds algebra associated to the data $(Y,\{\;,\;\},\Pi,\mathbf{m}^{\mathbb{N}})$, with $\mathbf{m}^{\mathbb{N}} (\mathbf{d})=C^{abs}_{Q,\d}(t)$, where $C^{abs}_{Q,\d}(t)$ are determined by i) and ii) above.
We have to show that the $\mathbb{N} \times \mathbb{N}^I$-graded character of $U(\widetilde{\mathfrak{n}}_Q^{\mathbb{N}})$ is equal to the r.h.s of (\ref{E:gradedproof1}). Using
(\ref{E:gradedproof0}), it is enough to show that
$$\pi\left(ch(U(\widetilde{\mathfrak{n}}^{\mathbb{N}}_Q))\right)_{|t=|\k|}=\pi\left(ch(U(\widetilde{\mathfrak{n}}_{Q/\k}))\right) \qquad \forall\;\k.$$
Allowing for zero multiplicities, we may replace $\mathcal{C}_{Q/\k}$ by $\mathbb{N}^I$ and view both $\widetilde{\mathfrak{g}}_{Q/\k}$ and $\widetilde{\mathfrak{g}}_Q^{\mathbb{N}}$ as Borcherds algebras built on the same lattice $Y=\mathbb{Z}^{\mathbb{N}^I}$ and same set of simple roots $\Pi=\{\widetilde{\epsilon}_{\d}\;|\; \d \in \mathbb{N}^I\}$ (but with different charge functions). Let $\{\tilde{\epsilon}_{\d}\}$ be the canonical basis of $Y$, and let $Y'$ be the quotient of $Y$ by the $\mathbb{Z}$-submodule generated by the collection of elements $\tilde{\epsilon}_{l\mathbf{d}}-l\tilde{\epsilon}_{\mathbf{d}}$, for $l \in \mathbb{N}$ and $\d \in \mathbb{N}^I$. There is a canonical identification $Y' \simeq \mathbb{Z}^{(\mathbb{N}^I)_{prim}}$. The projection map $\pi~:Y \to \mathbb{Z}^I$ factors through the quotient
$\pi'~: Y \to Y'$, and it is enough to prove that
$$\pi'\left(ch(U(\widetilde{\mathfrak{n}}^{\mathbb{N}}_Q))\right)_{t=|\k|} =\pi'\left(ch(U(\widetilde{\mathfrak{n}}_{Q/\k}))\right) \qquad \forall \;\k.$$
With obvious notations, the above equality boils down to
\begin{equation}\label{E:proofgraded2}
\pi'(({S}_Q^{\mathbb{N}}))_{t=|\k|}=\pi'({S}_{Q/\k}).
\end{equation}
 To prove (\ref{E:proofgraded2}), we will split the sums defining $S_Q^{\mathbb{N}}, S_{Q/k}$ according to the type of simple roots simple roots $\gamma_i$ occuring in some $\underline{\gamma} \in \sigma$. More precisely, we will say that a collection of orthogonal imaginary roots $\underline{\gamma}=(\gamma_i)_i \in \sigma$ is \textit{of primitive type}
 $\{\beta_1, \ldots, \beta_n\}$ for some $\beta_1, \ldots, \beta_n \in (\mathbb{N}^I)_{prim}$ if every $\gamma_i$ is an integer multiple of some $\beta_j$. Now let us fix a primitive type $\{\beta_1, \ldots, \beta_n\}$ and let ${}_{\beta}S_Q^{\mathbb{N}}, {}_\beta S_{Q/\k}$ stand for the partial sums over all $\underline{\gamma} \in \sigma$ of primitive type $\{\beta_1,\ldots, \beta_n\}$. Observe that any finite collection of integer multiples of the $\beta_i$ belongs to $\sigma$ since the form $\{\;,\;\}$ is pulled back from $\mathbb{Z}^I$. Hence we have
 $${}_\beta S_{Q/\k}=\prod_{i=1}^nZ_{\beta_i}, \qquad {}_\beta S_Q^{\mathbb{N}}=\prod_{i=1}^nZ^{\mathbb{N}}_{\beta_i}(t),$$
 where for $(\beta_i, \beta_i)<0$
 $$Z_{\beta_i}=\prod_{l \geq 1}(1-C_{Q,l\beta_i}(|\k|)z^{l\beta_i})-1, $$
 $$Z^{\mathbb{N}}_{\beta_i}=\prod_{l \geq 1}(1-C^{abs}_{Q,l\beta_i}(t)z^{l\beta_i})-1$$
 while for $(\beta_i,\beta_i)=0$
 $$Z_{\beta_i}=\prod_{l \geq 1}(1-z^{l\beta_i})^{C_{Q,l\beta_i}(|\k|)}-1=\textup{Exp}_z\left(-\sum_{l\geq 1} C_{Q,l\beta_i}(t)z^{l\beta_i}\right)_{|t=|\k|}-1$$
 and, writing $C^{abs}_{Q,l\beta_i}=\sum_p a_{p,l\beta_i}t^p$,
 $$Z^{\mathbb{N}}_{\beta_i}=\prod_{l \geq 1}\prod_{p \geq 0}(1-t^pz^{l\beta_i})^{a_{p,l\beta_i}}-1=\textup{Exp}_{t,z}\left(-\sum_{l\geq 1} C^{abs}_{Q,l\beta_i}(t)z^{l\beta_i}\right)-1.$$
 Conditions i) and ii) of Proposition~\ref{P:relcusp} precisely guarantee the equality $\pi'({}_{\beta}S_Q^{\mathbb{N}})_{t=|\k|}= \pi'({}_\beta S_{Q/\k})$. Note that $\pi'(z^{l\beta_i})=\pi'((z^{\beta_i})^l)$. Summing up over all primitive types yields (\ref{E:proofgraded2}). We are done.
\end{proof}

\vspace{.2in}

\noindent
\textbf{4.4.} \textit{Integrality of $C^{abs}_{Q,\d}$.} In this section we prove that the polynomials $C^{abs}_{Q,\d}$ defined by (\ref{E:gradedproof1}) actually belong to $\mathbb Z[t]$. We will first deal with the ones associated to isotropic vectors. Recall that $\mathcal{C}_{Q/\k}$ denotes the set of cuspidal dimensions, and that $\mathcal{C}_{Q/\k}^{iso}$, $\mathcal{C}_{Q/\k}^{hyp}$ denote the subsets of $\d$ satisfying respectively $(\d,\d)=0$, $(\d,\d)<0$. We know that $\mathcal{C}_{Q/\k}\backslash I^{re}=\mathcal{C}_{Q/\k}^{im}=\mathcal{C}_{Q/\k}^{iso}\sqcup\mathcal{C}_{Q/\k}^{hyp}$.

\begin{lemma}
Consider $\d\in \mathcal{C}_{Q/\k}^{iso}$ and a decomposition $$
\d=\sum_{1\le p\le r}n_p\d_p+\sum_{1\le q\le s}n'_q\epsilon_{i_q}$$
where $\d_p\in\mathcal{C}_{Q/\k}^{im}$, $i_q\in I^{re}$ and $n_p,n'_q\in\mathbb N_{>0}$. Set $\d^{re}=\sum_qn'_qi_q$. Then \begin{enumerate}[label=(\roman*)]
\item $(\d_p,\d_{p'})=0$ for every $p,p'$;
\item $(\d,\d_p)=(\d,i_q)=(\d_p,i_q)=0$;
\item $(\d^{re},\d^{re})=0$.
\end{enumerate}
\end{lemma}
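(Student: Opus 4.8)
The key structural input is that the symmetrized form $(\,,\,)$ on $Y$ is pulled back from $\mathbb{Z}^I$ via $\pi$, so all computations reduce to the form on $\mathbb{Z}^I$, and that $\mathcal{C}_{Q/\k}$ forms a Borcherds datum, which constrains the signs of the pairings. The plan is to exploit the negative semi-definiteness of $(\,,\,)$ on the imaginary cone together with the isotropy condition $(\d,\d)=0$ to force all the relevant cross terms to vanish.

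First I would expand $(\d,\d)$ using the given decomposition $\d=\sum_p n_p\d_p+\sum_q n'_q\epsilon_{i_q}$ and bilinearity. The idea is that $(\d,\d)=0$, while each diagonal term $(\d_p,\d_p)\le 0$ (since $\d_p\in\mathcal{C}^{im}_{Q/\k}$) and each cross term $(\d_p,\d_{p'})$, $(\d_p,\epsilon_{i_q})$ is $\le 0$ by the Borcherds datum condition $\{\alpha_i,\alpha_j\}\le 0$ for $i\ne j$. Strictly speaking I must be careful about the real contributions $\epsilon_{i_q}$: the real simple roots satisfy $(\epsilon_{i_q},\epsilon_{i_q})>0$, so I cannot simply assert nonpositivity of every summand. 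The clean way around this is to first establish that the pairing of $\d$ (an imaginary vector) against every $\d_p$ and every $\epsilon_{i_q}$ is nonpositive, which follows from $\d\in\mathcal{C}^{im}_{Q/\k}$ being in the imaginary cone, and then observe $(\d,\d)=\sum_p n_p(\d,\d_p)+\sum_q n'_q(\d,\epsilon_{i_q})$ is a sum of nonpositive terms equal to zero, forcing each to vanish. This gives (ii) for the pairings $(\d,\d_p)=(\d,\epsilon_{i_q})=0$.

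Next I would derive the remaining vanishings from (ii). Writing $0=(\d,\d_p)=\sum_{p'}n_{p'}(\d_{p'},\d_p)+\sum_q n'_q(\epsilon_{i_q},\d_p)$ and using that every term on the right is $\le 0$ (each $(\d_{p'},\d_p)\le 0$ by orthogonality of distinct imaginary roots in the Borcherds datum, each $(\epsilon_{i_q},\d_p)\le 0$ likewise), I conclude each such term vanishes, yielding (i), namely $(\d_p,\d_{p'})=0$, and also $(\d_p,\epsilon_{i_q})=0$, completing (ii). Finally, for (iii) I would compute $(\d^{re},\d^{re})$ where $\d^{re}=\sum_q n'_q i_q$: since $\d=\sum_p n_p\d_p+\d^{re}$ and all the cross pairings between the $\d_p$-part and $\d^{re}$ vanish by (ii), expanding $(\d,\d)$ splits as $(\sum_p n_p\d_p,\sum_p n_p\d_p)+(\d^{re},\d^{re})$, and a parallel argument shows the imaginary part pairs to zero with itself by (i), so $0=(\d,\d)$ forces $(\d^{re},\d^{re})=0$ as well.

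The main obstacle I anticipate is the sign bookkeeping around the real simple roots: the diagonal entries $(\epsilon_{i_q},\epsilon_{i_q})$ are strictly positive, so the naive ``sum of nonpositive terms'' argument must be applied only to pairings \emph{between distinct} roots or against genuinely imaginary vectors, never to a diagonal real term in isolation. The correct framing is to always pair the \emph{whole} imaginary vector $\d$ against each constituent, using that $\d$ lies in the imaginary cone to guarantee nonpositivity even against real roots, and only afterward to peel off the individual orthogonality statements. Making this logical ordering precise — establishing (ii) for $\d$ first, then bootstrapping to (i) and the $(\epsilon_{i_q},\d_p)$ vanishings, then deducing (iii) — is the crux, and once the ordering is right each individual step is a short positivity/bilinearity computation.
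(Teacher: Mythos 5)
Your proof is correct and takes essentially the same route as the paper: both expand $(\d,\d)=0$ in a way that avoids the strictly positive diagonal terms $(\epsilon_{i_q},\epsilon_{i_q})$, use the Borcherds datum sign conditions (imaginary simple roots pair nonpositively with everything, distinct simple roots pair nonpositively) to conclude that every term in a zero sum of nonpositive terms vanishes, and then deduce (iii) by expanding $\bigl(\d-\sum_p n_p\d_p,\;\d-\sum_p n_p\d_p\bigr)$. The only, immaterial, difference is bookkeeping order: the paper gets (i), $(\d_p,\epsilon_{i_q})=0$ and $(\d,\epsilon_{i_q})=0$ from a single mixed expansion and then extracts $(\d,\d_p)=0$, whereas you first pair $\d$ against each constituent and then bootstrap to the remaining vanishings.
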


\begin{proof}
We know that $(Y=\mathbb Z^{\mathcal{C}_{Q/\k}},\{\;,\;\},\Pi,\mathbf m=(C_{Q,\d})_\d)$ is a Borcherds datum, hence the elements of the sum$$
0=(\d,\d)=\sum_qn'_q(\epsilon_{i_q},\d)+\sum_{p,p'}n_pn_{p'}(\d_p,\d_{p'})+\sum_{p,q}n_pn'_{q}(\d_p,\epsilon_{i_q})$$
are all nonpositive, and thus equal to zero. Similarly,$$
0=(\d,\d)=\sum_pn_p(\d,\d_p)+\sum_qn'_q(\d,\epsilon_{i_q})=\sum_pn_p(\d,\d_p)$$
implies $(\d,\d_p)=0$. The equality \textit{(iii)} then follows from$$
(\d^{re},\d^{re})=(\d-\sum_pn_p\d_p,\d-\sum_pn_p\d_p)=0.$$
\end{proof}

We can now turn to the proof of Theorem~\ref{T:main2}.
\begin{proof}[Proof of Theorem~\ref{T:main2}]
Consider $\d\in\mathcal{C}_{Q/\k}^{iso}$, and $$
D=\sum_{1\le p\le r}n_p\d_p+\sum_{1\le q\le s}n'_q\epsilon_{i_q}\in\pi^{-1}(\d).$$
From the previous lemma we get$$
\dim U(\mathfrak n_{\mathbf m})[D]=\dim U(\mathfrak n_{Q^{re}})[\d^{re}]\times\prod_p\dim U(\mathfrak n_{\mathbf m})[n_p\d_p]$$
where $\d^{re}=\sum_{1\le q\le s}n'_q\epsilon_{i_q}$. Hence, summing up over all $D \in \pi^{-1}(\d)$ and then over all $\d \in \mathcal{C}_{Q/\k}^{iso}$ we have$$
\left\{\textup{Exp}_{t,z}\left(\sum_\d A_\d(t)z^\d\right)\right\}_{iso}=\left\{ch(U(\mathfrak n_{Q^{re}}))\times\textup{Exp}_{z}\left(\sum_{\d\in \mathcal{C}_{Q/\k}^{iso}} C_\d z^\d\right)\right\}_{iso}$$
where $
\{\sum_{\d}b_\d z^\d\}_{iso}=\sum_{\d\in Y^{iso}}b_\d z^\d$. Using~\ref{P:relcusp}, we obtain$$
\left\{\textup{Exp}_{t,z}\left(\sum_\d A_\d(t)z^\d\right)\right\}_{iso}=\left\{ch(U(\mathfrak n_{Q^{re}}))\times\textup{Exp}_{t,z}\left(\sum_{\d\in \mathcal{C}_{Q/\k}^{iso}} C^{abs}_\d z^\d\right)\right\}_{iso}$$
and we get that $C^{abs}_\d\in\mathbb Z[t]$ by induction on $\d\in \mathcal{C}_{Q/\k}^{iso}$ since the left hand side belongs to $\mathbb N[t][[z_i]]_{i\in I}$. Indeed we have $\d_p,\epsilon_{i_q}<\d$ in any nontrivial decomposition $D$, i.e.\ when $D\neq1.\d$.

Now consider $\mathbf m^{\mathbb N}_\d=C^{abs}_{Q,\d}(t)$ formally defined (hence a priori $\in\mathbb Q[t]$) by$$
\pi\left(ch(U(\mathfrak n_{\mathbf m}^{\mathbb N}))\right)=\textup{Exp}_{z,t}\left(\sum_\d A_\d(t)z^\d\right)$$
with the notations of section 2.3. We know that $ch(U(\mathfrak n_{\mathbf m}^{\mathbb N}))$ is obtained from$$
ch(U(\mathfrak n_{\mathbf 1}))=\sum_{D\in Y}d_Dz^D$$
by replacing $z^\d$ by $m_\d(t)z^\d$ if $\d\in\mathcal{C}_{Q/\k}^{hyp}$ and by
\begin{equation}\label{E:isomult}
\prod_j(1-t^jz^\d)^{m_{\d,j}}-1
\end{equation}
if $\d\in\mathcal{C}_{Q/\k}^{iso}$. From the isotropic case, (\ref{E:isomult}) belongs to $\mathbb N[t][[z_i]]_{i\in I}$.
Since $d_D=1$ if $D=1.\d$, we can again conclude by induction on $\d\in\mathcal{C}_{Q/\k}^{hyp}$ that $C^{abs}_{Q,\d}\in\mathbb Z[t]$.
\end{proof}

\vspace{.2in}

\noindent
\textbf{4.5.} \textit{An example : totally negative quivers.} Here we provide the proof of Theorem~\ref{P:totneg}. 
From the very definition of a totally negative quiver it follows that, for any $\k$, all simple roots of $\widetilde{\mathfrak{g}}_{Q/\k}$ are hyperbolic. This implies the first statement and that
$\widetilde{\mathfrak{n}}_{Q/\k}$ is a free Lie algebra, and thus that $U(\widetilde{\mathfrak{n}}_{Q/\k})$ is a free associative algebra. In this situation we have
$$\sum_{\d} \dim(\mathbf{H}_{Q/\k}[\d])z^{\d}=\sum_{\d}\dim(U(\widetilde{\mathfrak{n}}_{Q/\k}[\d]))z^{\d}=\left(1-\sum_{\d}C_{Q,\d}(|\k|)z^{\d}\right)^{-1}.$$ 
From the relations
$$\sum_{\d}\dim(\mathbf{H}_{Q/\k}[\d])z^{\d}=\textup{Exp}_z\left(\sum_{\d>0} I_{Q,\d}(|\k|)z^{\d}\right)=\textup{Exp}_{t,z}\left(\sum_{\d>0} A_{Q,\d}(t)z^{\d}\right)_{|t=|\k|}$$
we deduce Theorem~\ref{P:totneg} for the polynomials $C_{Q,\d}(t)$. \qed

\vspace{.2in}

\noindent
\textbf{4.6.} \textit{Remarks.} We end this note with a pair of comments~:

\vspace{.05in}

 i) In the case of a smooth projective curve $X$, the Langlands correspondence sets up a bijection between ($l$-adic) cuspidal functions (of rank $r$ ) and irreducible ($l$-adic) representations of $\pi_1^{arith}(X)$ of dimension $r$. Similarly, there is a bijection between ($l$-adic) absolutely cuspidal functions (of fixed rank $r$) and irreducible ($l$-adic) representations of $\pi_1^{arith}(X)$ of dimension $r$ whose restriction to $\pi_1^{geom}(X)$ remains irreducible. This yields a relation between the dimensions of the 
 spaces of cuspidal and absolutely cuspidal functions, somewhat similar to the relation between the polynomials $A_{Q,\d}(t)$ and $I_{Q,\d}(t)$, see Lemma~\ref{L:Kacexp}, or between the polynomials $C_{Q,\d}(t)$ and $C^{abs}_{Q,\d}(t)$ \textit{along a line spanned by an isotropic cuspidal dimension}. This suggests that there might be a `spectral' parametrization of cuspidal functions for quivers only in the case of \textit{isotropic} cuspidal dimensions.

\vspace{.05in}

ii) Again in analogy with the Langlands correspondence, it seems natural to try to \textit{construct explicitly} cuspidal functions for a quiver $Q$ and some dimension vector $\d$; likewise, one may try to construct some perverse sheaf lift of a suitable basis
of the space of cuspidal functions $\mathbf{H}^{cusp}_{Q/\k}$. Note that there are no natural Hecke operators in the context of quivers, and hence \textit{a priori} no preferred choice of basis in $\mathbf{H}^{cusp}_{Q/\k}$. We hope to come back to these questions in the future. 

\vspace{.2in}

\appendix

\section{A trivial variant of Kac's conjecture}

\vspace{.1in}

\begin{proposition} Let $Q=(I,\Omega)$ be any quiver, and let $Q^{re}=(I^{re},\Omega^{re})$ be the full subquiver of $Q$ whose vertices are real. Then for any dimension vector $\d \in \mathbb{N}^I$ we have
$$A_{Q,\d}(0)=\dim(\mathfrak{g}_{Q^{re}}[\d]).$$
\end{proposition}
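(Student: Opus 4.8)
The plan is to prove the identity $A_{Q,\d}(0)=\dim(\mathfrak{g}_{Q^{re}}[\d])$ by comparing two known facts about Kac polynomials at $t=0$ with the root multiplicities of an honest symmetric Kac–Moody algebra. The key input is Hausel's theorem (the Kac positivity conjecture, strengthened to give the constant term): for a quiver \emph{all of whose vertices are real}, i.e. loop-free, the constant term $A_{Q',\d}(0)$ equals the multiplicity $\dim(\mathfrak{g}_{Q'}[\d])$ of $\d$ as a root of the associated Kac–Moody algebra $\mathfrak{g}_{Q'}$. So for the subquiver $Q^{re}$ this is already the statement; the whole content is to show that passing from $Q$ to $Q^{re}$ does not change the constant term of the Kac $A$-polynomial, and in particular that $A_{Q,\d}(0)=0$ whenever $\d\notin\mathbb{N}^{I^{re}}$.

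First I would recall the formula expressing $A_{Q,\d}$ through the counting of absolutely indecomposable representations, and more concretely Hua's generating-series formula for the $A$-polynomials (the plethystic logarithm of the full representation count, cf. Lemma~\ref{L:Kacexp}). The constant term $A_{Q,\d}(0)$ is obtained by specializing $t\mapsto 0$ in that formula. The crucial observation is that a vertex $i$ carrying a loop (so $\langle\epsilon_i,\epsilon_i\rangle\le 0$, i.e.\ $i\in I^{iso}\cup I^{hyp}$) contributes only through powers of $t$ with strictly positive exponent once one disentangles the plethystic expression: loops force the dimension of the automorphism group, and hence the exponent of $q=t$ governing the count of absolutely indecomposables, to grow, so that the $t^0$-coefficient vanishes along any dimension vector supported even partially on a non-real vertex. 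I would make this precise by isolating, in Hua's formula, the factor associated to each vertex and each edge and checking that the constant term of the resulting power series in $t$ is supported on $\mathbb{N}^{I^{re}}$ and there coincides with the constant term for $Q^{re}$.

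Concretely, the cleanest route is through the graded dimension formula already used in the body of the paper. Evaluating (\ref{E:Kacexp}) at $t=0$ gives
\begin{equation*}
\textup{Exp}_{t,z}\Big(\sum_{\d}A_{Q,\d}(t)z^{\d}\Big)_{|t=0}=\textup{Exp}_z\Big(\sum_{\d}A_{Q,\d}(0)z^{\d}\Big),
\end{equation*}
and on the other hand the left-hand side is the $t=0$ specialization of the full representation-counting series. I would argue that this specialization equals $ch(U(\mathfrak{n}_{Q^{re}}))$: at $t=0$ only the loop-free part of the counting survives (every representation supported on a vertex with a loop, or using a loop, contributes a positive power of $q$ to its count), so the surviving series is exactly the Kac-polynomial series of $Q^{re}$, whose $A$-polynomial constant terms are the Kac–Moody root multiplicities $\dim(\mathfrak{g}_{Q^{re}}[\d])$ by Hausel's theorem. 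Since $\textup{Exp}_z$ is injective, matching coefficients yields $A_{Q,\d}(0)=\dim(\mathfrak{g}_{Q^{re}}[\d])$ for all $\d$, with both sides zero off $\mathbb{N}^{I^{re}}$.

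The main obstacle I anticipate is justifying rigorously the claim that the $t=0$ specialization kills exactly the contributions of the non-real vertices and edges, i.e.\ the reduction from $Q$ to $Q^{re}$ at the level of the plethystic counting series. This requires a careful bookkeeping of the power of $q$ attached to absolutely indecomposable representations: one must verify that any absolutely indecomposable representation whose support meets $I\setminus I^{re}$, or whose dimension vector is non-real, is counted with a strictly positive power of $q$, so that its contribution vanishes at $t=0$. This is the ``trivial variant'' alluded to in the title, and while morally clear, the honest verification amounts to tracking the Euler-form exponents through Hua's formula (or equivalently through the stratification of the representation stack by loop-supported subrepresentations) and confirming no cancellation produces a spurious constant term; everything else is then a formal consequence of Hausel's theorem together with injectivity of $\textup{Exp}_z$.
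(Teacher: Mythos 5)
Your reduction is the same as the paper's: specialize the plethystic identity $\sum_{\d}H_{Q,\d}(t)z^{\d}=\textup{Exp}_{t,z}\left(\sum_{\d}A_{Q,\d}(t)z^{\d}\right)$ at $t=0$, observe that representations of $Q$ with dimension vector supported on $I^{re}$ are exactly representations of $Q^{re}$, invoke Hausel's theorem for the loop-free quiver $Q^{re}$, and use injectivity of $\textup{Exp}_z$ to match coefficients. But the single step that carries all the content --- the vanishing $A_{Q,\d}(0)=0$ (equivalently $H_{Q,\d}(0)=0$) for $\d\notin\mathbb{N}^{I^{re}}$ --- is exactly the step you do not prove: you flag it yourself as ``the main obstacle'' and defer it to an unspecified bookkeeping of Euler-form exponents in Hua's formula. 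That bookkeeping is not routine: the plethystic $\textup{Log}$ of Hua's formula involves heavy cancellations, and nothing in your sketch excludes a spurious constant term. Worse, the heuristic you offer (``loops force the dimension of the automorphism group to grow, hence the $t^0$-coefficient vanishes'') misidentifies the mechanism: in $H_{Q,\d}(q)$ every isomorphism class is counted with weight $1$ irrespective of its automorphism group, and for loop-free quivers the same reasoning would wrongly predict vanishing constant terms, whereas these are precisely the nonzero Kac--Moody root multiplicities. So as written the proposal is a correct formal reduction wrapped around an acknowledged gap at the crux.

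The paper closes this gap by a short concrete argument of a quite different flavor, which you would need (or some substitute for). Since the full count $H_{Q,\d}(t)$ lies in $\mathbb{N}[t]$, writing $H_{Q,\d}(t)=c_{Q,\d}+tK_{Q,\d}(t)$ shows that $H_{Q,\d}(0)=0$ follows from the \emph{integer} divisibility $q\mid H_{Q,\d}(q)$ for (enough) prime powers $q$. This divisibility is proved by exhibiting a free action: pick an imaginary vertex $i$ in the support of $\d$ and an edge loop $h$ at $i$, and let $\mathbb{G}_a(\k)$ act on isomorphism classes by $x_h\mapsto x_h+\lambda\,\mathrm{Id}_{\k^{\d_i}}$; comparing the eigenvalue multisets of $x_h$ and $x_h+\lambda$ shows the action is free (certainly once $\mathrm{char}\,\k>\d_i$, which already suffices for an identity of polynomials), so all orbits have size $q$ and $q\mid H_{Q,\d}(q)$. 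From there one concludes $A_{Q,\d}(0)=0$ off $\mathbb{N}^{I^{re}}$ via the same $\textup{Exp}$ specialization you wrote down. If you insist on your route through Hua's formula instead, you must actually carry out the constant-term extraction and rule out cancellation --- a genuinely harder task than the one-line translation trick, and until it is done your argument does not constitute a proof.
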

\begin{proof} It is enough to show that $A_{Q,\d}(0)=0$ whenever $\mathbf{d}$ is not supported on $I^{re}$ (and then we are reduced to the usual Kac conjecture proved by Hausel).
Let $H_{Q,\d}(t) \in \mathbb{N}[t]$ be the polynomial counting all isomorphism classes of representations of dimension $\d$. The relation to $A_{Q,\mathbf{n}}(t)$ reads
\begin{equation}\label{E:app1}
\sum_{\d} H_{Q,\d}(t)z^{\d}=\textup{Exp}_{t,z}\left( \sum_{\mathbf{d}} A_{Q,\d}(t)z^{\mathbf{d}}\right).
\end{equation}
Evaluating (\ref{E:app1}) at $t=0$, we see that $A_{Q,\d}(0)=0$ for all $\d \not\in \mathbb{N}^{I^{re}}$ if and only if $H_{Q,\d}(0)=0$ for all $\d \not\in \mathbb{N}^{I^{re}}$. Writing
$H_{Q,\d}(t)=c_{Q,\d} + t K_{Q,\d}(t)$ with $c_{Q,\d} \in \mathbb{N}$ and $K_{Q,\d}(t) \in \mathbb{N}[t]$ we immediately see that $H_{Q,\d}(0)=0$ of and only if $q\;|\; H_{Q,\d}(q)$ for all prime powers $q$. Let us now fix a finite field $\k$ and $\d \not\in \mathbb{N}^{I^{re}}$. Let $i \in I \backslash I^{re}$ be an imaginary vertex for which $\d_i \neq 0$, and let $h \in \Omega$ be an edge loop at $i$. Define an action of the additive group $\mathbb{G}_a(\k)=(\k,+)$ on the set $Ind_{Q,\d,\k}$ of isomorphism classes of indecomposable representations of dimension $\d$ by setting 
$$\lambda \cdot (x_\gamma)_{\gamma \in \Omega} =(x_{\gamma} + \lambda \delta_{\gamma,h} Id_{\k^{\d_i}})_{\gamma \in \Omega}.$$
Considering the eigenvalues of $x_h$, we see that this action is free. This proves that $q\;|\; H_{Q,\d}(q)$ as wanted.
\end{proof}

\vspace{.1in}

\noindent
\textit{Remark.} A more interesting variant of Kac's conjecture in the context of quivers with edge loops is proved in \cite{BSV} : the constant term of the \textit{nilpotent} Kac polynomial
$A^{1}_{Q,\mathbf{d}}(t)$ is equal to the multiplicity of the root $\mathbf{d}$ in the \textit{generalized} Borcherds algebra $\mathfrak{g}_Q$ associated to $Q$ in \cite{Bozec}. 

\vspace{.2in}

\vspace{.2in}

\small{
\noindent
T. Bozec, \texttt{bozec@math.univ-lyon1.fr},\\
Institut Camille Jordan - Universit\'e Lyon 1, B\^at. Braconnier,\\
43 Boulevard du 11 Novembre 1918, 69622 Villeurbanne cedex

\vspace{.1in}
 
\noindent
O. Schiffmann, \texttt{olivier.schiffmann@math.u-psud.fr},\\
D\'epartement de Math\'ematiques, Universit\'e de Paris-Sud Paris-Saclay, \\
B\^at.~425, 91405 Orsay Cedex}
\end{document}